%
%
%
\documentclass[11pt]{amsart}
\usepackage{amssymb,mathrsfs,graphicx,enumerate,color}
\usepackage{amsthm,amsfonts,amssymb,epsfig,graphics,amsmath,amsbsy,enumerate}
\usepackage{colortbl}
\definecolor{black}{rgb}{0.0, 0.0, 0.0}
\definecolor{red}{rgb}{1.0, 0.5, 0.5}

\topmargin-0.1in \textwidth6.in \textheight8.5in \oddsidemargin0in
\evensidemargin0in
\title[Uniqueness of 3D planar contact discontinuity]{Uniqueness of a planar contact discontinuity for 3D compressible Euler system in a class of zero dissipation limits from Navier-Stokes-Fourier system}

\author[Kang]{Moon-Jin Kang}
\address[Moon-Jin Kang]
{ Department of Mathematical Sciences, \newline
Korea Advanced Institute of
Science and Technology \\
 Daejeon 34141, Korea}
 \email{moonjinkang@kaist.ac.kr}

\author[Vasseur]{Alexis F. Vasseur}
\address[Alexis F. Vasseur]{\newline Department of Mathematics, \newline The University of Texas at Austin, Austin, TX 78712, USA}
\email{vasseur@math.utexas.edu}

\author[Wang]{Yi Wang}
\address[Yi Wang]{\newline Institute of Applied Mathematics, AMSS, CAS, Beijing 100190, P. R. China
\newline
and School of Mathematical Sciences, University of Chinese Academy of Sciences,
\newline Beijing 100049, P. R. China}
\email{wangyi@amss.ac.cn}

\newtheorem{theorem}{Theorem}[section]
\newtheorem{lemma}{Lemma}[section]

\newtheorem{proposition}{Proposition}[section]
\newtheorem{remark}{Remark}[section]

\newtheorem{definition}{Definition}[section]

\newcommand{\bbr}{\mathbb R}

\newcommand{\bbn}{\mathbb N}

\newcommand{\bbt} {\mathbb T}

\numberwithin{figure}{section}
%


\newcommand{\beq}{\begin{equation}}
\newcommand{\eeq}{\end{equation}}
\newcommand{\bsp}{\begin{split}}
\newcommand{\esp}{\end{split}}












\def\eps{\varepsilon }

\newcommand\adots{\mathinner{\mkern2mu\raise1pt\hbox{.}
\mkern3mu\raise4pt\hbox{.}\mkern1mu\raise7pt\hbox{.}}}

\renewcommand{\div}{{\rm div}}


%

\def\charf {\mbox{{\text 1}\kern-.30em {\text l}}}







\newcommand{\br}{\bar\rho}
\newcommand{\bt}{\bar\theta}

\newcommand{\weakto}{\rightharpoonup}

\begin{document}
\bibliographystyle{plain}

\date{\today}

\subjclass[2010]{}
\keywords{Compressible Euler system, Contact discontinuity, Uniqueness, Stability, Compressible Navier-Stokes-Fourier system, Vanishing dissipation limit, Relative entropy, Conservation law}

\thanks{\textbf{Acknowledgment.}  M.-J. Kang was partially supported by the NRF-2019R1C1C1009355.
A. Vasseur was partially supported by the NSF grant: DMS 1614918. Y. Wang is supported by NSFC grants No. 12090014 and 11688101.
}

\begin{abstract}
We prove the stability of a planar contact discontinuity without shear, a family of  special discontinuous solutions for the three-dimensional full Euler system, in the class of vanishing dissipation limits of the corresponding Navier-Stokes-Fourier system. We also show that solutions of the Navier-Stokes-Fourier system converge to the planar contact discontinuity when the initial datum converges to the contact discontinuity itself. This implies the uniqueness of the planar contact discontinuity in the class that we are considering. Our results give an answer to the open question, whether the planar contact discontinuity is unique for the multi-D compressible Euler system. Our proof is based on the relative entropy method, together with the theory of $a$-contraction up to a shift and our new observations on the planar contact discontinuity.
\end{abstract}

\maketitle \centerline{\date}

\tableofcontents

\section{Introduction}
\setcounter{equation}{0}

We consider the Navier-Stokes-Fourier system in three dimensions with periodic boundary: for any $x=(x_1,x_2,x_3)\in\bbt^3$, $t\geq 0$,
\begin{align}
\begin{aligned}\label{NSF}
\left\{ \begin{array}{ll}
        \partial_t \rho+ \div_x (\rho u) =0,\\
       \partial_t  (\rho u)+\div_x (\rho u\otimes u) + \nabla_x p = \nu\div_x\mathbb{S}, \\
         \partial_t \left(\rho \left(\frac{|u|^2}{2} +e\right)\right) +\div_x \left(\left( \rho \left(\frac{|u|^2}{2} +e\right)+ p\right)u \right)  = \div_x (\kappa \nabla_x\theta) +\nu\div_x (\mathbb{S}u),\end{array} \right.
\end{aligned}
\end{align}
where  the functions $\rho=\rho(t,x), u(t,x)=(u_1,u_2,u_3)^\top(t,x), e=e(t,x), \theta=\theta(t,x)$ and $p=p(t,x)$ represent respectively the fluid density, velocity, specific internal energy, absolute temperature and the pressure.

The aim of this paper is to investigate the uniqueness, and stability of special discontinuous solutions, known as contact discontinuities without dissipation, of the associated Euler equation (with $\nu=0$ and $\kappa=0$). 
The study is based on a careful study of the zero dissipation limit of the Navier-Stokes-Fourier system, for vanishing viscosities ($\nu\to 0$) and heat conductivities ($\kappa\to0$) . 
We make the following assumptions on the physical system \eqref{NSF}.

Assume that the viscous stress tensor  $\mathbb{S}$, with the coeffient $\nu>0$, is given by
\[
\mathbb{S} = \mu(\theta) (\nabla u +(\nabla u)^\top) +\lambda(\theta) (\div u) Id_{3\times3},
\]
where $(\nabla u)^\top$ denotes the transpose of the matrix $\nabla u$, and $Id_{3\times3}$ represents the $3\times3$ identity matrix.
We assume that $ \mu(\theta)$ and $\lambda(\theta) $  depend linearly on $\theta$, that is:
\beq\label{ass-mu}
 \mu(\theta)= \mu_1\theta,\quad  \lambda(\theta)= \lambda_1\theta,
\eeq
where $\mu_1$ and $\lambda_1$ are both constants satisfying the physical constraints $\mu_1>0$ and $2\mu_1+3\lambda_1> 0.$
By the Fourier laws, the heat flux is given by $-\kappa\nabla_x \theta$ in $\eqref{NSF}_3$ with $\kappa>0$ denoting the heat-conductivity coefficient. Here we assume that both the parameters $\nu$ and  $\kappa$ are the positive vanishing coefficients.

The pressure $p$ is a  function of $\rho$ and $\theta$ of the form:
\beq\label{pressure}
p(\rho,\theta)=R\rho\theta + p_e(\rho),\quad p_e(\rho)=a\rho^\gamma.
\eeq
The first part of the pressure coincides with the ideal gas laws, while the second part $p_e$ is an elastic pressure (sometimes called cold pressure)
 proportional to the isentropic pressure of the ideal gas, with the adiabatic constant $ \gamma> 1$ and both $R$ and $a$ are positive constants.


We assume that the specific internal energy $e$ has the following form:
\beq\label{e}
e(\rho,\theta)=\frac{a\rho^{\gamma-1}}{\gamma-1} + Q_1(\theta), \quad Q_1(\theta)=\left\{ \begin{array}{ll}
       \frac{R}{\gamma-1}\theta \quad &\mbox{if } \theta\ge\theta_*\\
       c_*  \theta^2 \quad &\mbox{if } 0<\theta<\theta_*,  \end{array} \right.
\eeq
where $\theta_*$ is some small positive constant, and the positive constant $c_*$ is chosen  such that $Q_1$ is Lipschitz in $\theta$. Note that  $Q_1'(\theta)>0$ for all $\theta>0$. The first term of  \eqref{e} is the energy associated to the elastic pressure of \eqref{pressure}. For $\theta>\theta_*$, \eqref{e} is consistent with the ideal gas law, and together with  \eqref{pressure}, ensures that  
\beq\label{ideal}
p=(\gamma-1) \rho e,\quad\mbox{for }~\theta\ge\theta_*. 
\eeq

The definition of $Q_1$ for extremely cold temperature $\theta<\theta_*$ imposes  the validity of the third law of thermodynamic, which states that the entropy cannot blow up when the temperature approaches the absolute zero.
The validity of the third law of thermodynamic is important in our study. Note that it requires to deviate from the ideal gas dynamics  at least  for very cold temperatures (see below \eqref{def-s1}).


We can now compute the entropy functional of the system, following the 
second law of thermodynamics:
\begin{equation}\label{2nd}
\theta ds=pd(\frac1\rho)+de.
\end{equation}
The entropy takes the form
\beq\label{defs}
s=-R\ln \rho+s_1(\theta),
\eeq
where $s_1(\theta)$ satisfy
\beq\label{def-s1}
s_1(\theta) = \int^\theta_0 \frac{Q_1'(z)}{z} dz=\left\{ \begin{array}{ll}
       \frac{R}{\gamma-1}\ln\theta \quad &\mbox{if } \theta\ge\theta_*\\
       2c_*  \theta \quad &\mbox{if } 0<\theta<\theta_*.  \end{array} \right.
\eeq
Note that the above state equation for the entropy $s$ satisfy the third law of thermodynamic, that is, the entropy  approaches a constant value as temperature approaches absolute zero.

If $\nu=0$ and $\kappa=0$, then the corresponding 3D compressible Euler system reads as
\begin{align}
\begin{aligned}\label{Euler}
\left\{ \begin{array}{ll}
        \partial_t \rho+ \div_x (\rho u) =0,\\
       \partial_t  (\rho u)+\div_x (\rho u\otimes u) + \nabla_x p = 0, \\
         \partial_t \left(\rho \left(\frac{|u|^2}{2} +e\right)\right) +\div_x \left(\left( \rho \left(\frac{|u|^2}{2} +e\right)+ p\right)u \right)  = 0.\end{array} \right.
\end{aligned}
\end{align}

In this paper, we aim to show that a planar contact discontinuity without shear for 3D Euler system \eqref{Euler} is stable and unique in the class of vanishing dissipation limit ($\kappa\rightarrow0+$ and $\nu\rightarrow0+$) of solutions to 3D compressible Navier-Stokes-Fourier system \eqref{NSF}.

\subsection{Main result}
First we describe a planar contact discontinuity solution to the 3D compressible Euler equations \eqref{Euler} for moderate temperature, that is, $\theta_\pm>\theta_*$.  If we consider the following Riemann initial data
\begin{align}
\begin{aligned}\label{contact0}
(\rho, u, \theta)(x,t=0)=\left\{ \begin{array}{ll}
        (\rho_-,u_-,\theta_-) \quad &  x_1<0\\
    (\rho_+, u_+, \theta_+)\quad & x_1>0,  \end{array} \right.
\end{aligned}
\end{align}
where $\rho_\pm>0,  \theta_\pm>\theta_*, u_\pm=(u_{1\pm}, 0, 0)^t$ are prescribed constants without shear for the 3D Euler system \eqref{Euler} with $x=(x_1,x_2,x_3)\in\mathbb{R}^3$, then the Riemann problem \eqref{Euler}-\eqref{contact0} admits a planar contact discontinuity solution (as a self-similar solution in $x_1$)
\begin{align}
\begin{aligned}\label{contact1}
(\rho, u, \theta)(x,t)=\left\{ \begin{array}{ll}
        (\rho_-,u_-,\theta_-) \quad & x_1<u_{1*}t,\\
    (\rho_+, u_+, \theta_+)\quad & x_1>u_{1*}t,  \end{array} \right.
\end{aligned}
\end{align}
provided that
\begin{equation}
u_{1-}=u_{1+}:=u_{1*}, \quad p_-=p_+,
\end{equation}
where $p_\pm:=R\rho_\pm\theta_\pm + p_e(\rho_\pm)$.

By Galilean invariance, we can assume that $u_{1*}=0$ without loss of generality. Then the planar contact discontinuity solution is a stationary solution. 

Therefore, we consider a planar, stationary and shifted contact discontinuity solution $(\bar\rho, \bar u, \bar \theta)$ to the 3D compressible Euler equations \eqref{Euler} on $\bbt^3$ defined as follows:
\begin{align}
\begin{aligned}\label{contact}
(\bar\rho, \bar u, \bar \theta)(x):=\left\{ \begin{array}{ll}
        (\rho_-, 0, \theta_-), \quad & 0<x_1<\frac{1}{2},\quad x=(x_1,x_2,x_3)\in\bbt^3\\
    (\rho_+, 0, \theta_+), \quad & \frac{1}{2}<x_1<1,  \end{array} \right.
\end{aligned}
\end{align}
with 
\beq\label{pbdy}
\bar p:=p_-=p_+\quad \mbox{on } \bbt^3.
\eeq
Since $\theta_\pm>\theta_*$, it follows from \eqref{pressure} and \eqref{ideal} that the contact discontinuity solution $(\bar\rho, \bar u, \bar \theta)$ satisfies the ideal gas equality:
\[
p_\pm =(\gamma-1)\rho_\pm e_\pm.
\]
Thus, the end states $\mathcal{E}_\pm$ for the total energy $\mathcal{E}=\rho (\frac{|u|^2}{2} +e)$ satisfy
\[
\mathcal{E}_- =\rho_- e_- = \frac{p_-}{\gamma-1} = \frac{p_+}{\gamma-1} = \rho_+ e_+ =: \mathcal{E}_+ .
\]
Therefore, we set
\beq\label{ebdy}
\bar{\mathcal{E}}:=\mathcal{E}_-=\mathcal{E}_+\quad \mbox{on } \bbt^3.
\eeq

The main theorem is on the uniqueness and stability of the planar contact discontinuity solution \eqref{contact} to the 3D compressible Euler equations \eqref{Euler} in the class of zero dissipation limits (as $\kappa\rightarrow0+$ and $\nu\rightarrow0+$) of admissible weak solutions to the 3D compressible Navier-Stokes-Fourier equations \eqref{NSF} in $\mathbb{T}^3$.

For statement of the main theorem, we first consider a convex function
\beq\label{def-FG0}
\mathcal{F} (y):=\left\{ \begin{array}{ll}
       0 \quad&\mbox{if } \rho_*:=\min\{\rho_-,\rho_+\} \le y\le  \max\{\rho_-,\rho_+\}=:\rho^* , \\
       (y-\rho_*)^2 \quad&\mbox{if }  y\le \rho_*, \\
       (y-\rho^*)^2 \quad&\mbox{if }  y\ge  \rho^*.
        \end{array} \right.
        \qquad
\eeq

\begin{theorem}\label{thm:main}
Consider the Euler system \eqref{Euler} with \eqref{pressure}-\eqref{e}.
Assume $\gamma>2$. Let $\bar U:= (\bar\rho, \bar\rho\bar u, \bar{\mathcal{E}})$ be a stationary contact discontinuity \eqref{contact} with \eqref{pbdy}-\eqref{ebdy}.
Let $U_0:=(\rho_0, \rho_0 u_0, \mathcal{E}_0)$ be an initial datum to the Navier-Stokes-Fourier system \eqref{NSF} such that
\[
\int_{\bbt^3} (-\rho s) (U_0|\bar U) dx <\infty ,
\]
with the relative entropy $(-\rho s) (U_0|\bar U)$ defined in \eqref{r-entropy} and
\beq\label{bdd-near}
(-\rho s) (U_0|\bar U) \in L^\infty (\Omega),\quad\mbox{for some neighborhood $\Omega$ of the plane } x_1=\frac{1}{2} .
\eeq
For any given $T>0$, let $(\rho^{\kappa, \nu}, m^{\kappa, \nu}, \mathcal{E}^{\kappa, \nu})$ be an admissible weak solution of system \eqref{NSF} on $[0,T]$ in the sense of Definition \ref{def-sol} in Section 2, where $m^{\kappa, \nu}:=\rho^{\kappa, \nu} u^{\kappa, \nu}$ and $ \mathcal{E}^{\kappa, \nu}:=\rho^{\kappa, \nu}\left(\frac{|u^{\kappa, \nu}|^2}{2} +e^{\kappa, \nu} \right)$.\\
Then, there exists a limit $(\rho, m, \mathcal{E})$ such that, up to a subsequence, as $\kappa\to 0$ and $\nu\to 0$,
\begin{align}
\begin{aligned}\label{thmlim}
&\rho^{\kappa, \nu} \weakto \rho  \quad\mbox{weakly}-* \quad\mbox{in }~L^\infty(0,T;L^2(\bbt^3)),\\
&m^{\kappa, \nu}\weakto m   \quad\mbox{weakly}\quad\mbox{in }~ L^2(0,T;L^{\frac{4}{3}}(\bbt^3))  ,\\
&\mathcal{E}^{\kappa, \nu}\weakto \mathcal{E}   \quad\mbox{weakly}-* \quad\mbox{in }~ L^\infty(0,T;\mathcal{M}(\bbt^3)),
\end{aligned}
\end{align}
where $\mathcal{M}$ denotes the space of Radon measures,
and
\begin{align}
\begin{aligned}\label{thmine}
&\sup_{[0,T]}\int_{\bbt^3}\Big(\mathcal{F}(\rho)  +\frac{|m|^2}{2\rho} \Big)dx + \|\mathcal{E}-\bar{\mathcal{E}}\|_{L^\infty(0,T;\mathcal{M}(\bbt^3))} \\
&\qquad \le C \sqrt{\int_{\bbt^3} (-\rho s) (U_0|\bar U) dx}+ C\int_{\bbt^3} (-\rho s) (U_0|\bar U) dx.
\end{aligned}
\end{align}
Furthermore, let  $(U_0^n)_{n\in\bbn}$ be a sequence of initial data such that
\[
\int_{\bbt^3} (-\rho s) (U_0^n|\bar U) dx \to 0 \quad\mbox{as } n\to \infty,
\]
then, any inviscid limit $(\rho^n, m^n, \mathcal{E}^n)$ satisfying \eqref{thmlim}-\eqref{thmine} and corresponding to $U_0^n$ satisfies that, up to a subsequence, as $n\to\infty$,
\begin{align*}
\begin{aligned}
&\rho^n \weakto \bar \rho   \quad\mbox{weakly}-* \quad\mbox{in }~L^\infty(0,T;L^2(\bbt^3)),\\
&m^n \to 0 (=\bar\rho\bar u) \quad\mbox{in }~ L^\infty(0,T;L^{\frac{4}{3}}(\bbt^3))  ,\\
&\mathcal{E}^n \to \bar{\mathcal{E}} \quad\mbox{in }~ L^\infty(0,T;\mathcal{M}(\bbt^3)).
\end{aligned}
\end{align*}
Therefore, the planar contact discontinuity to 3D Euler system is stable and unique in the class of vanishing dissipation limits of solutions to 3D Navier-Stokes-Fourier system \eqref{NSF}.
\end{theorem}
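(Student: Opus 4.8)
The plan is to run the relative entropy method---in the quantitative $a$-contraction-up-to-a-shift form---directly between an admissible weak solution $U^{\kappa,\nu}=(\rho^{\kappa,\nu},m^{\kappa,\nu},\mathcal E^{\kappa,\nu})$ of \eqref{NSF} and the stationary contact $\bar U$, and then to pass to the inviscid limit $\kappa,\nu\to0$. Write $\eta(U):=-\rho s$ for the convex entropy and $\eta(U|\bar U)$ for the associated relative entropy. Everything is organized around three structural features of the wave, which I would record first as the key observations: the contact is stationary (no time-derivative source), it carries no shear ($\bar u=0$), and, by \eqref{pbdy}--\eqref{ebdy}, both the pressure and the total energy are continuous across $\{x_1=\tfrac12\}$. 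Consequently the only genuine jumps in $\bar U$---and in the entropy variable $D\eta(\bar U)$---live in the density and temperature channels, while the momentum and shear channels of $D\eta(\bar U)$ carry no jump.

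First I would derive the uniform a priori estimates. Testing the admissibility inequality of Definition \ref{def-sol} against $\bar U$ yields a bound, uniform in $(\kappa,\nu)$ and controlled by the initial relative entropy, on $\int_{\bbt^3}\eta(U^{\kappa,\nu}|\bar U)\,dx$ together with the viscous and heat dissipation. A local convexity/coercivity lemma for $\eta(\cdot|\bar U)$ then bounds $\mathcal F(\rho^{\kappa,\nu})+\tfrac{|m^{\kappa,\nu}|^2}{2\rho^{\kappa,\nu}}$ and the energy deviation from below; the truncation in \eqref{def-FG0} precisely reflects that along the linearly degenerate field the density may range freely over $[\rho_*,\rho^*]$ at no entropy cost, and the hypothesis $\gamma>2$ is what makes the quadratic tail of $\mathcal F$ dominated by the super-quadratic relative pressure, hence $\rho^{\kappa,\nu}$ bounded in $L^2$ (and then $m^{\kappa,\nu}$ in $L^{4/3}$ by H\"older). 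These bounds furnish the weak-$*$ limits \eqref{thmlim} via Banach--Alaoglu, and the convexity of the functionals will let me transfer the final estimate to the limit by lower semicontinuity.

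The core, and the main obstacle, is the relative entropy inequality itself. Differentiating $\int_{\bbt^3}\eta\big(U^{\kappa,\nu}\,\big|\,\bar U(\cdot-X(t)e_1)\big)\,dx$ in time produces the nonnegative viscous/heat dissipation, viscous error terms, and a singular interface contribution $\int_{\{x_1=1/2\}}[\![D\eta(\bar U)]\!]\cdot A_1\big(U^{\kappa,\nu}|\bar U\big)$, with $A_1$ the relative first-component flux. Since the field is linearly degenerate, the wave produces no dissipation of its own to absorb this term---this is the essential difficulty that is absent for shocks. The observations above collapse the interface term drastically: the density component of $A_1(U|\bar U)$ vanishes identically (that flux is linear), while the momentum and shear components of $[\![D\eta(\bar U)]\!]$ vanish because $\bar u=0$; what survives is a single energy-channel trace weighted by the temperature jump $[\![1/\bar\theta]\!]$ and built from the normal momentum. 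I would then choose the Lipschitz shift $X(t)$ through an ODE annihilating exactly the part of this trace that is linear in the perturbation, and the weight $a$ (constant on each side, with a small jump tuned to the end states) so that the remaining quadratic interface form is dominated by the available dissipation; the square-root term on the right of \eqref{thmine} is the expected signature of the Cauchy--Schwarz step controlling the shift/cross terms. To make these trace terms legitimate for merely weak solutions, I would run the computation against a smooth approximation of $\bar U$ and pass to the limit, using \eqref{ass-mu} (the linear law $\mu,\lambda\propto\theta$) to keep the viscous cross-terms integrable and $o(1)$ as $\kappa,\nu\to0$.

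Finally, collecting the inequality and sending $\kappa,\nu\to0$ gives \eqref{thmine} for the limit $(\rho,m,\mathcal E)$ by weak lower semicontinuity of the convex functionals. For the stability and uniqueness statement I would apply \eqref{thmine} along the sequence $U_0^n$: the right-hand side is controlled by $\int_{\bbt^3}\eta(U_0^n|\bar U)\,dx\to0$, so coercivity forces $\mathcal F(\rho^n)\to0$, $\tfrac{|m^n|^2}{2\rho^n}\to0$ and $\|\mathcal E^n-\bar{\mathcal E}\|_{\mathcal M}\to0$, whence the stated convergences to $(\bar\rho,0,\bar{\mathcal E})$ follow---with the density converging only weakly, as forced by the linear degeneracy. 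I expect the genuinely delicate points to be, in order: the rigorous justification of the interface trace terms for weak solutions; the simultaneous tuning of the shift $X(t)$ and the weight $a$ so that the non-dissipative quadratic interface form has the correct sign; and the uniform control of the Navier--Stokes--Fourier error terms in the joint limit $\kappa,\nu\to0$.
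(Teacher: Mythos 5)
Your proposal has two genuine gaps, one in the core mechanism and one in the uniqueness step.

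\textbf{The rigid shift cannot work for a multi-D contact.} You propose to differentiate $\int_{\bbt^3}\eta\big(U^{\kappa,\nu}\,\big|\,\bar U(\cdot-X(t)e_1)\big)\,dx$ and to handle the resulting interface trace term by tuning the scalar shift $X(t)$ and a weight $a$ so that the quadratic interface form is ``dominated by the available dissipation.'' This is precisely the step that fails. Since the contact field is linearly degenerate, the wave supplies no dissipation of its own, and the only dissipation present (viscous and heat, of size $\nu$ and $\kappa$) vanishes in the limit, so it cannot absorb an $O(1)$ quadratic interface term uniformly in $\kappa,\nu$; for shocks the analogous term is absorbed by the shock's own entropy dissipation, which is exactly what is missing here. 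Moreover, in three dimensions the contact interface is transported by the full velocity field and is deformed tangentially into a general surface; a shift $X(t)e_1$, constant in $(x_2,x_3)$, cannot track it, and the traces of the merely weak solution $U^{\kappa,\nu}$ on a plane need not exist. The paper's resolution is structurally different: instead of shifting the plane, the partition itself is transported by the flow, via the function $\psi$ solving $\partial_t\psi+\bar u^{\kappa,\nu}_\delta\cdot\nabla\psi=0$ in \eqref{psi-eq} with datum ${\bf 1}_{\{x_1>1/2\}}$, and the weighted functional \eqref{psed} with weights $\theta_\mp$ (chosen so that $\bar\theta\,ds=\bar p\,d(1/\rho)+de$ is exact across the interface, using $p_-=p_+$). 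With this choice the dangerous interface contribution appears only through $\partial_\tau\bar\psi^{\kappa,\nu}_{\delta,\eps}+u^{\kappa,\nu}\cdot\nabla\bar\psi^{\kappa,\nu}_{\delta,\eps}$, which vanishes up to mollification errors; these are then killed by Aubin--Lions compactness and the Lions commutator estimate (Lemma \ref{lem:comm}), taking the limits in the order $\kappa\to0$, $\delta\to0$, $\eps\to0$, $\nu\to0$. No trace of the weak solution is ever needed. Your plan contains no substitute for this mechanism, and the sign problem you would face is not fixable by the choice of $X$ and $a$.

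\textbf{The uniqueness conclusion does not follow from coercivity.} You assert that $\mathcal F(\rho^n)\to0$, $|m^n|^2/\rho^n\to0$ and $\|\mathcal E^n-\bar{\mathcal E}\|_{\mathcal M}\to0$ imply $\rho^n\rightharpoonup\bar\rho$. But $\mathcal F$ in \eqref{def-FG0} vanishes identically on the whole interval $[\rho_*,\rho^*]$, so $\mathcal F(\rho^n)\to0$ only confines the density to that interval and carries no information distinguishing $\bar\rho$ (which equals $\rho_-$ on one side of the interface and $\rho_+$ on the other) from any other density field valued in $[\rho_*,\rho^*]$ --- this is the degeneracy you yourself point out earlier in your proposal. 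The paper closes this gap with a separate argument: from the limiting continuity equation $\partial_t\rho+\div m=0$ in \eqref{fincon}, together with $m^n\to0$ in $L^\infty(0,T;L^{4/3})$ and the strong initial convergence $\rho_0^n\to\bar\rho$ in $L^2$ (which follows from $\int p_e(\rho_0^n|\bar\rho)\,dx\le C\int(-\rho s)(U_0^n|\bar U)\,dx\to0$ and $\gamma>2$), one gets $\int\Phi\,\rho(t)\,dx=\int\Phi\,\bar\rho\,dx$ for every test function $\Phi$ and every $t$, hence $\rho=\bar\rho$ a.e. Without this continuity-equation step, the density identification --- and thus the uniqueness statement --- is unproven.
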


\subsection{Remarks for the main result}
$\bullet$ The existence of global solutions for \eqref{NSF} should be constructed following the theory developed by Feireisl \cite{Feireisl_b}. However, note that our assumption especially \eqref{ass-mu}, is not compatible with the hypotheses as stated in \cite{Feireisl_b}. For this reason, we leave the construction of these solutions to a future work. 
Assumption \eqref{ass-mu} is needed to get the uniform bound of $\nu \|\nabla u^{\kappa,\nu}\|^2_{L^2(0,T;L^2(\bbt^3))}$ (see \eqref{pri-v}) from the entropy dissipation \eqref{ent-ineq}, which is crucial for our asymptotic analysis. Also, we mention that the assumption on $\gamma>2$ is crucial for the uniform bound \eqref{pri-ent} in Remark \ref{rem:use}, which is useful in Section \ref{subsec:k}.    
\\

$\bullet$ It is well known that Lipschitz solutions to the Euler system \eqref{Euler} are stable and unique in the class of admissible weak (or entropic) solutions (see Dafermos \cite{Dafermos1} and DiPerna \cite{DiPerna}). However, the situation for discontinuous solutions is far more complicated. Especially, for the case of entropic shocks or shear flows, the uniqueness is usually not true. Recently, results on the non-uniqueness were obtained by the convex integration method introduced by De Lellis and Sz\'ekelyhidi \cite{DS09, DS10}. Using this method, Chiodaroli, De Lellis and Kreml \cite{CDK} showed the existence of Riemann initial data generated by an entropy shock for which there exist infinitely many bounded entropy solutions. We also refer to \cite{AKKMM,Ch,CDK,CFK,CK,FKKM,KM18,KKMM} for other related studies on the non-uniqueness of entropic shocks. 

These results have been extended by  B$\check{\rm r}$ezina, Kreml and M\'acha \cite{BKM} to the case of   planar contact discontinuity, for the case of the 2D isentropic Euler system with the Chaplygin gas pressure law.

In the case of rarefaction waves (that are discontinuous only at $t=0$), the uniqueness was proved in the class of entropic solutions to the multi-D Euler system (see \cite{ChenChen, FK15, FKV15}). The time-asymptotic stability and vanishing viscosity limit of isentropic Navier-Stokes equatios/Navier-Stokes-Fourier equations to the planar rarefaction wave of 2D/3D compressible Euler equations could be found in \cite{LW, LWW-1, LWW1, LWW2}. \\

$\bullet$ In Theorem \ref{thm:main}, we consider the simplest discontinuous solution for \eqref{Euler}. Note that this solution corresponds to the second characteristic field in the 1D setting, and is a fundamental building block in the study of small $BV$ entropy solutions (see for instance \cite{Dafermos2,Serre_book}). Our result shows its stability in the 3D setting.
Note that it is not known if this solution is unique among  the class of entropy solutions for the multi-D compressible Euler system.

 An important feature of Theorem \ref{thm:main} is that it proves the stability in the class of zero dissipation limits of the Navier-Stokes-Fourier system rather than entropy solutions of the Euler system. 
In addition, our theorem shows the convergence of solutions of \eqref{NSF} to the contact discontinuity when the initial value converges to the contact discontinuity itself. 
Note that we do not need any {\it a priori} regularity on the dissipation limits, which are  automatically obtained by the entropy bound. Especially, the limits do not need to be solutions to the Euler system.
In the 3D setting, it is not known whether global solutions of the Euler system can be constructed for a large class of initial data. This open question makes working in the large class of dissipation limits appealing. 

Replacing the notion of weak solutions to Euler by the one of inviscid limit is already interesting and important in the 1D setting. We first mention results on the uniqueness of entropy shocks in the class of entropy solutions satisfying the locally $BV$ regularity (see  Chen-Frid-Li \cite{Chen1}) or the strong trace property (see Vasseur et al. \cite{KVARMA, LV, Vasseur-2013} and Krupa \cite{Krupa2}). However, the global-in-time propagation of those regularities remains open (except for the system with  $\gamma=3$  see  \cite{Vasseur_gamma3}). 
Recently, Kang and Vasseur \cite{KV-unique19} proved the uniqueness  and stability of entropy shocks for the 1D isentropic Euler system in the class of inviscid limits of solutions to the corresponding Navier-Stokes system.
This gives an answer, for the case of entropy shock, to the  conjecture: The compressible Euler system admits a unique entropy solution in the class of vanishing viscosity solutions to the associated  compressible Navier-Stokes system (as the physical viscous system for the Euler). As a comprehensive study related to this conjecture, Bianchini and Bressan \cite{BB} obtained a global unique entropy solution to a 1D strictly hyperbolic $n\times n$ system with small BV initial datum, which is obtained from vanishing ``artificial'' viscosity limit of the associate parabolic system. 

Theorem  \ref{thm:main} answers the conjecture in the case of contact discontinuity for the full Euler system \eqref{Euler} with \eqref{pressure}-\eqref{e}.\\

$\bullet$ Our proof is based on the theory of $a$-contraction up to a shift, first developed in the one dimensional hyperbolic case  in  \cite{KVARMA,Vasseur-2013}. The main idea is the construction of a weight function (the $a$ function) which is both bounded and bounded by below, such that the discontinuous solutions enjoy a contraction property for the corresponding weighted relative entropy, up to a shift. With the exception of  the scalar case, considered by Leger in \cite{Leger}, the contraction property is usually not verified without weight (see \cite{Serre-Vasseur}). In the case of shocks, the method was extended to 1D Navier-Stokes in 
\cite{KV-unique19}, and to the inviscid limit in \cite{Kang-V-1} (see also  \cite{AKV,CKKV,CV,Kang19,Kang,Kang-V-NS17,Krupa1,Krupa2,SV_16dcds,Stokols} for other developments of the theory). 

For the 1D  study in the hyperbolic case of the contact discontinuity, the correct weighted function is explicit and given by $\theta_-$ on the left and $\theta_+$ on the right (see \cite{SV_16}). It was used to study the zero dissipation limit in 1D in this context in    \cite{VW}. For the other studies on the vanishing dissipation limit to the 1D  Riemann solution which may contain shock and rarefaction waves and contact discontinuity, one can refer \cite{HWY, HWY1, HWY2, HWWY} and references therein. Note that the limit was proved in \cite{HWWY} for the generic 1D Riemann solution. 

Our result in Theorem \ref{thm:main} can be seen as the extension of the work of \cite{VW} in the multi-D setting. It is the first application of the method for systems in multi-D (see \cite{KVW} for an application to the multi-D scalar case).

\subsection{Ideas of the proof} \label{sec:idea}

Let us denote $U=(\rho, \rho u, \mathcal{E})$ the conservative variables, and $\eta(U)=-\rho s$ the entropy. Since $\eta$ is convex, the relative entropy 
\beq\label{r-entropy}
\eta(U|\bar{U})=\eta(U)-\eta(\bar{U})-d\eta(\bar{U})\cdot(U-\bar{U})
\eeq
define a pseudo distance of $U$ to $\bar{U}$ which is locally (for $U$ and $\bar{U}$ bounded) equivalent to 
$|U-\bar{U}|^2$. 
Consider now $U(t,x)$ solution to the system \eqref{NSF}, and $\bar{U}_-$, $\bar{U}_+$ the left and right states of the given contact discontinuity.
The general idea of the theory of a-contraction with shift for the hyperbolic case ($\nu$=$\kappa$=0) is to find disjoint shifted domains for all $t$,  $\Omega_-(t)$, $\Omega_+(t)$ such that $\Omega_-(t)\cup \Omega_+(t)=\bbt^3$, and coefficients $a_->0, a_+>0$ such that 
$$
t\to a_-\int_{\Omega_-(t)}\eta(U(t,x)|\bar{U}_-)\,dx+a_+\int_{\Omega_+(t)}\eta(U(t,x)|\bar{U}_+)\,dx
$$
is decreasing in time. Note that $p_+=p_-=\bar p$, therefore
in the Lagrangian variables, the second law of thermodynamics gives that
$$
({\bf 1}_{\{x_1<1/2\}} \theta_-+{\bf 1}_{\{x_1>1/2\}} \theta_+)d s =\bar p \,d ({1/\rho}) +de
$$
is an exact form globally on $\bbt^3$. And so, at least formally, the contraction holds for 
$a_- =\theta_-$, $a_+=\theta_+$,  $\Omega_-(t)$ the set $\{x<1/2\}$ transported by the flow $\{u(s,\cdot), 0<s<t\}$, and $\Omega_+(t)$ the set $\{x\geq1/2\}$ transported by the same  flow (see \cite{SV_16}).
Still formally, considering $(\rho(t,x), \rho(t,x)u(t,x), \rho(t,x)[e(t,x)+\frac{|u(t,x)|^2}{2}])$ a solution to the Euler equation \eqref{Euler}, and  $\psi$ the solution to the transport equation:
\begin{equation}\label{transport}
\partial_t \psi+u\cdot\nabla\psi=0,\qquad \psi(0)={\bf 1}_{\{x_1>1/2\}}, 
\end{equation}
the following function 
\beq\label{psed}
\int (\theta_- (1-\psi(t,x))\eta(U(t,x)|\bar U_-)+\theta_+\psi(t,x) \eta(U(t,x)|\bar U_+))\,dx
\eeq
would be non-increasing in time.

The main obstruction to the study above is that the velocity $u$ solution to Euler system is not smooth enough to construct solution to \eqref{transport}. It also assumes some non-oscillatory behavior (strong traces) at the boundary of $\Omega_\pm(t)$, properties that are not known to exist for solutions to Euler. 

The idea is then to consider the Navier-Stokes-Fourier system instead, which provides solutions $u\in L^2(0,T;H^1(\bbt^3))$. With this regularity, the flow \eqref{transport} can be constructed. Following \cite{CV, VW}, we consider the extra viscous $\nu$ and $\kappa$ terms as source terms to be controlled. We first pass into the limit as $\kappa$ goes to zero.  For this limit,  extra regularizations have to be performed, via convolutions, both on $u$ and $\psi$. When $\kappa=0$, we pass to the limit in the regularization terms, using controls on Lions commutators (see Lemma \ref{lem:comm}). 

When the last limit $\nu$ goes to zero, the regularity on the velocity $u$ is lost, and so $\Omega_-(t)$ and $\Omega_+(t)$ can become mixed to each other. Note that 
$ \mathcal{E}_-= \mathcal{E}_+=\bar { \mathcal{E}}$, since the contact discontinuity has values in the regime where the gas verifies the ideal gas equality \eqref{ideal}. 
Therefore it can be shown in Lemma \ref{lem:newf} that  the weighted relative entropy \eqref{psed} controls uniformly 
$$
\int_{\bbt^3} \mathcal{F}(\rho)+ \widetilde{\mathcal G}(\mathcal{E}-\bar { \mathcal{E}})+\frac{\rho|u|^2}{2},
$$
where  $\widetilde{\mathcal G}$ is the convex function:
\begin{eqnarray*}
\widetilde{\mathcal G}(y)=\left\{ \begin{array}{ll}
|y|^2, & \text{for } |y|<1,\\[1mm]
 2|y|-1,   &\text{for } |y|\geq 1.
  \end{array} \right.
\end{eqnarray*}
Note that the function $\widetilde{\mathcal G}$ is equivalent to $\mathcal{G}(y)=\min \{|y|^2,|y|\}$ defined in \eqref{def-FG}, and since we are in a bounded domain, this provides a control on the $L^1$-norm of $\mathcal{E}-\bar { \mathcal{E}}$.
However the function $\mathcal F$ is degenerated, and  is  equal to zero for $\rho_*\le \rho\le \rho^*$. Consequently, \eqref{psed} controls the perturbations in both velocity and energy.
However, due to the possible mixing of the area $\Omega_-$ and $\Omega_+$,  \eqref{psed} controls only the value of the density outside $[\rho_*,\rho^*]$. This degeneracy is similar to the situation of the pressureless Euler system in \cite{Kang-V-2}. Thus, we recover the control of the density in the similar way as  \cite{Kang-V-2}, using the continuity equation. 
Indeed, if $\rho u=0$ at the limit (which is a quantity controlled by the relative entropy), then it follows from the continuity equation that $\partial_t \rho=0$, and so $\rho =\bar\rho$.\\

The rest of the paper is as follows. In Section 2, we first present the notion on admissible weak solutions to \eqref{NSF}. Then we provide the explicit form for the weighted relative entropy, and construct nonlinear functionals uniformly controlled by it. Section 3 is dedicated to the proof of the main theorem.

\section{Preliminaries}
\setcounter{equation}{0}

In this section, we first present the Definition \ref{def-sol} as mentioned in Theorem \ref{thm:main} on admissible weak solutions of \eqref{NSF}, together with useful bounds in Remark \ref{rem:use}.  Then we compute the explicit form for the weighted relative entropy, and construct nonlinear functionals controlled by it.

\subsection{Admissible weak solutions for the Navier-Stokes-Fourier system \eqref{NSF}}.

\begin{definition} \label{def-sol}
We say that $(\rho,\rho u, \mathcal E, \rho s)$ with $ \mathcal E:=\rho \left(\frac{|u|^2}{2} +e\right)$ is an admissible weak solution to the 3D compressible Navier-Stokes-Fourier system \eqref{NSF}-\eqref{e} in $[0,T]\times\mathbb{T}^3$ with an initial data $(\rho_0,u_0, \theta_0)$
 if the following holds:\\
(i) (Mass conservation equation)
For any test function $\varphi\in \mathcal{D}([0,T)\times\mathbb{T}^3)$,
\beq\label{mass-con}
\int_0^\infty\int_{\bbt^3}  \Big(\rho\partial_\tau \varphi +\rho u \cdot \nabla \varphi\Big) dxd\tau 
+\int_{\bbt^3}\rho_0(x)\varphi(x,0)dx =0,
\eeq
and
\beq\label{ini-den}
\rho \in C([0,T]; L^\gamma_{\rm weak}(\bbt^3)) .
\eeq
(ii) (Momentum conservation equation)
For any  $\vec{\varphi}\in \big[\mathcal{D}([0,T)\times\mathbb{T}^3)\big]^3$,
\beq\label{momentum-con}
\int_0^\infty\int_{\bbt^3}  \Big(\rho u\cdot\partial_\tau \vec{\varphi} +\rho u\otimes u: \nabla \vec{\varphi}+p\, {\rm div} \vec{\varphi}-\nu\mathbb{S}:\nabla\vec{\varphi}\Big) dxd\tau +\int_{\bbt^3}\rho_0u_0(x)\cdot\vec{\varphi}(x,0)dx =0.
\eeq
(iii) (Energy inequality)
\beq\label{energy-d}
\int_{\bbt^3} \mathcal{E}(x,t) dx  \leq \int_{\bbt^3} \mathcal{E}(x,0) dx <\infty  , \qquad \forall t\in[0,T],
\eeq
where $\mathcal{E}|_{t=0}$ satisfies the compatibility condition:
\[
 \mathcal{E}|_{t=0}= \rho_0 \Big(\frac{|u_0|^2}{2} +\frac{a\rho_0^{\gamma-1}}{\gamma-1} + Q_1(\theta_0) \Big) ,
\]
and
\beq\label{ini-ene}
\limsup_{t\to 0+} \int_{\bbt^3} \mathcal{E}(x,t) dx = \int_{\bbt^3} \mathcal{E}(x,0) dx .
 \eeq
(iii) (Entropy dissipation)
For any non-negative test function $\varphi\in \mathcal{D}([0,T)\times\mathbb{T}^3)$, 
\begin{align}
\begin{aligned}\label{ent-ineq}
&\int_0^\infty\int_{\bbt^3} \Big[ \rho s\varphi_\tau+\rho su\cdot\nabla\varphi- \kappa\frac{\nabla\theta}{\theta}\cdot\nabla\varphi+\kappa\frac{ |\nabla \theta|^2}{\theta^2}\varphi + \frac{\nu \mathbb{S} : \nabla_x u}{\theta}\varphi \Big] dxd\tau \\
&\qquad \qquad+\int_{\bbt^3}(\rho s)(x,0)\varphi(x,0) dx\leq 0,
\end{aligned}
\end{align}
where $(\rho s)|_{t=0}$ satisfies the compatibility condition: 
\beq\label{comp-ent}
(\rho s)|_{t=0} = -R\rho_0\ln \rho_0+\rho_0 s_1(\theta_0) ,
\eeq
and
\beq\label{ini-econ}
\limsup_{t\to0+} \int_{\bbt^3} (\rho s)(x,t) \eta(x) dx =  \int_{\bbt^3} (\rho s)(x,0) \eta(x) dx ,\quad \forall \eta\in\mathcal{D}(\bbt^3) .
\eeq

\end{definition}


\begin{remark} \label{rem:use}
If $\gamma>2$, then by \eqref{energy-d}, \eqref{ent-ineq} and \eqref{defs}, there exists a constant $C$ independent of $\kappa, \nu$ (but depending on the initial data) such that
\beq\label{pri-mass}
\| \rho\|_{L^\infty(0,T;L^2(\bbt^3))} \le C ,
\eeq
\beq\label{pri-ent}
 \| \rho s\|_{L^\infty(0,T;L^2(\bbt^3))} \le C ,
\eeq
and
\beq\label{pri-v}
 \nu \|\nabla u\|^2_{L^2(0,T;L^2(\bbt^3))} +\kappa\left\|\frac{\nabla\theta}{\theta}\right\|^2_{L^2(0,T;L^2(\bbt^3))}\le C.
\eeq
Indeed, since $\| \rho\|_{L^\infty(0,T;L^\gamma(\bbt^3))} \le C$ by \eqref{energy-d}, we have \eqref{pri-mass}. The uniform bound \eqref{pri-v} is obtained by the entropy dissipation of \eqref{ent-ineq} with \eqref{ass-mu}.
Also, note that \eqref{defs},  \eqref{def-s1} and  \eqref{energy-d} imply that
\begin{align*}
\begin{aligned}
\int_{\bbt^3} |\rho s|^2 dx &\le C\Big(\int_{\bbt^3} \rho^2|\ln\rho |^2 \mathbf{1}_{\rho>1} dx + \int_{\bbt^3} \rho^2 dx + \int_{\bbt^3} \rho^2|\ln\theta|^2  \mathbf{1}_{\theta>\theta_*}  dx \Big) \\
&\le  C\Big(\int_{\bbt^3} \rho^\gamma  dx +\int_{\bbt^3} \rho^2 dx+ \int_{\bbt^3} \rho^2 \theta^{\frac{\gamma-2}{\gamma-1}}  \mathbf{1}_{\theta>\theta_*}  dx \Big)\\
&\le  C\Big(\int_{\bbt^3} \rho^\gamma  dx +\int_{\bbt^3} \rho^2 dx+ \int_{\bbt^3} (\rho^\gamma+\rho \theta ) \mathbf{1}_{\theta>\theta_*}  dx \Big) \le C,
\end{aligned}
\end{align*}
which gives \eqref{pri-ent}. \\
Note that, to obtain \eqref{pri-ent}, we use the setting \eqref{e} and \eqref{def-s1} satisfying the third law of thermodynamics. This is the only place we use it.
\end{remark}

\subsection{Weighted relative entropy}
We define a function $\mathcal{S}:\bbr_+\to\bbr$ by
\beq\label{news}
\mathcal{S} (Q_1(\theta)) := s_1(\theta),\quad \theta>0.
\eeq
Then the function $\mathcal{S}(Q_1)$ is strictly concave in $Q_1$. Indeed, since $\mathcal{S}'  (Q_1(\theta)) Q_1'(\theta)=s_1'(\theta)$, and $s_1'(\theta) = Q_1'(\theta)/\theta$ by \eqref{def-s1}, we have
\beq\label{SQ}
\mathcal{S}'  (Q_1(\theta))=1/\theta,
\eeq
and thus $\mathcal{S}''  (Q_1(\theta))Q_1'(\theta)=-1/\theta^2 <0$, which yields $S''  (Q_1)<0$ by $Q_1'>0$.\\

Let $U:=(\rho, m, \mathcal{E})$ be a solution of \eqref{NSF} where $m=\rho u$ and $ \mathcal E=\rho \left(\frac{|u|^2}{2} +e\right)$, and $\bar U:=(\bar \rho, \bar m, \bar {\mathcal{E}})$ be the contact discontinuity \eqref{contact} where  $\bar m=0$ and $\bar {\mathcal{E}}=\bar \rho \bar e $.
As in \cite{SV_16,VW}, we will consider the relative entropy functional weighted by the temperature $\bar \theta$ connecting two different constants $\theta_-$ and $\theta_+$:
\beq\label{wrs}
\int_{\bbt^3} \bar \theta (-\rho s) (U|\bar U)  dx,
\eeq
where $(-\rho s) (U|\bar U)$ represents the relative entropy associated with the entropy $-\rho s$, defined as follows: for $\eta(U):=-\rho s$,
\beq\label{gen-e}
(\eta)(U|\bar U):=\eta(U|\bar U)= \eta(U)-\eta(\bar U) -d \eta(U)\cdot (U-\bar U).
\eeq
In general, the notation \eqref{gen-e} will be used for a given function $\eta$ throughout the paper.
Note that if $U\mapsto \eta(U)$ is strictly convex, then $\eta(U|\bar U)$ is positively definite.
Thanks to the following lemma, we see that the functional \eqref{wrs} can be written as the sum of four sub-functionals that are all positively definite.

\begin{lemma}
Let $U:=(\rho, m, \mathcal{E})$ be a solution of \eqref{NSF} with $m=\rho u$ and $\mathcal{E}= \rho\Big(\frac{|u|^2}{2} +e \Big)$. Let $\bar U:=(\bar \rho, \bar m=0, \bar {\mathcal{E}})$ be the planar contact discontinuity \eqref{contact}. Then,
\begin{align}
\begin{aligned}\label{rel-def}
\bar \theta (-\rho s) (U|\bar U)   = \bt R (\rho\ln\rho) (\rho|\br)  + \bt\rho (-\mathcal{S})(Q_1(\theta)| Q_1(\bt))  + \frac{p_e(\rho|\br) }{\gamma-1}  +\frac{|m|^2}{2\rho}.
\end{aligned}
\end{align}
Here, since all of $\rho\mapsto\rho\ln\rho$, $\rho\mapsto p_e(\rho)$, $Q_1\mapsto (-\mathcal{S})(Q_1)$ and $(\rho, m) \mapsto \frac{|m|^2}{2\rho}$ are strictly convex, all the terms on the right-hand side of \eqref{rel-def} are positively definite.
\end{lemma}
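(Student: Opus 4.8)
The plan is to prove the identity \eqref{rel-def} by a direct computation: I express the entropy $\eta(U)=-\rho s$ as an explicit function of the conservative variables $U=(\rho,m,\mathcal E)$, compute its differential $d\eta$, and then expand the definition \eqref{gen-e} of the relative entropy $\eta(U|\bar U)$, multiplying through by $\bar\theta$ and sorting the resulting terms into the four claimed pieces. First I would invert the thermodynamic relations: from $\mathcal E=\frac{|m|^2}{2\rho}+\frac{a\rho^\gamma}{\gamma-1}+\rho Q_1(\theta)$ and \eqref{e}, the thermal energy is recovered as
\beq
Q_1(\theta)=\frac{\mathcal E}{\rho}-\frac{|m|^2}{2\rho^2}-\frac{a\rho^{\gamma-1}}{\gamma-1}=:\mathcal Q(U),
\eeq
so that, using \eqref{news} and \eqref{defs}, one has $\eta(U)=R\rho\ln\rho-\rho\,\mathcal S(\mathcal Q(U))$.

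Next I would compute the gradient. The crucial ingredient is \eqref{SQ}, namely $\mathcal S'(Q_1(\theta))=1/\theta$, combined with the chain rule applied to $\mathcal Q(U)$. A short calculation then yields the clean thermodynamic identities
\beq
\frac{\partial\eta}{\partial\mathcal E}=-\frac1\theta,\qquad \frac{\partial\eta}{\partial m_i}=\frac{u_i}{\theta},\qquad \frac{\partial\eta}{\partial\rho}=R\ln\rho+R-s_1(\theta)-\frac1\theta\Big(\frac{|u|^2}{2}-e-a\rho^{\gamma-1}\Big).
\eeq
Evaluating at the contact state $\bar U=(\bar\rho,0,\bar{\mathcal E})$, where $\bar u=0$, the momentum component of $d\eta(\bar U)$ vanishes. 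Substituting into \eqref{gen-e} and multiplying by $\bar\theta$ reduces the left-hand side to
\beq
\bar\theta\,\eta(U|\bar U)=\bar\theta\big[\eta(U)-\eta(\bar U)\big]-\bar\theta\,\frac{\partial\eta}{\partial\rho}(\bar U)\,(\rho-\bar\rho)+(\mathcal E-\bar{\mathcal E}).
\eeq

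The core of the argument is to sort this right-hand side into the four pieces, grouping terms by type. (i) The kinetic term $\frac{|m|^2}{2\rho}$ enters only through $\mathcal E-\bar{\mathcal E}$ and is exactly the relative quantity of the convex map $(\rho,m)\mapsto\frac{|m|^2}{2\rho}$ at $\bar m=0$. (ii) The $\rho\ln\rho$ terms assemble, via the $R$-part of $\frac{\partial\eta}{\partial\rho}(\bar U)$, into $\bar\theta R(\rho\ln\rho)(\rho|\bar\rho)$. (iii) The elastic terms $\frac{a}{\gamma-1}\rho^\gamma$ combine with $p_e'(\bar\rho)=a\gamma\bar\rho^{\gamma-1}$ to form $\frac{1}{\gamma-1}p_e(\rho|\bar\rho)$. (iv) The remaining thermal terms $-\bar\theta\rho s_1(\theta)+\rho Q_1(\theta)$, together with the $s_1(\bar\theta)$- and $Q_1(\bar\theta)$-parts of $\frac{\partial\eta}{\partial\rho}(\bar U)$, reconstruct $\bar\theta\rho(-\mathcal S)(Q_1(\theta)|Q_1(\bar\theta))=\bar\theta\rho\big[-s_1(\theta)+s_1(\bar\theta)+\frac1{\bar\theta}(Q_1(\theta)-Q_1(\bar\theta))\big]$. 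Positivity of each piece then follows from strict convexity: $\rho\ln\rho$ and $a\rho^\gamma$ (for $\gamma>1$) are strictly convex, $-\mathcal S$ is strictly convex by $\mathcal S''<0$ established below \eqref{SQ}, and $\frac{|m|^2}{2\rho}$ is jointly strictly convex in $(\rho,m)$.

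The main obstacle is step (iv) and the overall bookkeeping: one must verify that the affine correction $d\eta(\bar U)\cdot(U-\bar U)$ together with all constant ($\bar\rho,\bar\theta$) contributions redistributes exactly, with nothing left over. The delicate point is that the thermal term carries the prefactor $\rho$ rather than $\bar\rho$; this is forced because the linear-in-$\rho$ pieces $\bar\theta\rho\, s_1(\bar\theta)-\rho Q_1(\bar\theta)$ arising from $-\bar\theta\frac{\partial\eta}{\partial\rho}(\bar U)(\rho-\bar\rho)$ must combine with $-\bar\theta\rho\, s_1(\theta)+\rho Q_1(\theta)$ coming from $\bar\theta\eta(U)$ and $\mathcal E-\bar{\mathcal E}$, while the matching constant terms cancel precisely against $-\bar\theta\eta(\bar U)$. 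Checking this cancellation cleanly is the only genuinely error-prone part; everything else is the chain rule together with \eqref{SQ}.
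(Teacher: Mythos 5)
Your proposal is correct and follows essentially the same route as the paper: expand $\eta(U|\bar U)$ from the definition \eqref{gen-e}, use $\mathcal{S}'(Q_1(\theta))=1/\theta$ from \eqref{SQ} together with $Q_1(\bar\theta)=\tfrac{R}{\gamma-1}\bar\theta$ and the ideal-gas relation at the contact state, and regroup into the same four convex pieces. The only cosmetic difference is that you compute $d\eta(\bar U)$ by the chain rule on the explicit conservative-variable formula $\eta=R\rho\ln\rho-\rho\,\mathcal{S}(\mathcal{Q}(U))$, whereas the paper reads it off from the Gibbs relation \eqref{2nd}; your gradient formulas and the term-by-term cancellation (including the subtle point that the thermal piece carries the prefactor $\rho$ rather than $\bar\rho$) all check out.
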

\begin{proof}
First of all, by the definitions \eqref{defs} and \eqref{news}, we have
\beq\label{imss}
(-\rho s)(U|\bar U) = R \rho\ln\rho -\rho \mathcal{S} (Q_1(\theta))  -\Big(R \br\ln\br -\br \mathcal{S} (Q_1(\bt)) \Big)  - d(-\rho s)(\bar U)\cdot (U-\bar U).
\eeq
To compute the last term above, we first note that
\beq\label{ints}
d(-\rho s)(\bar U) = \partial_\rho(-\rho s)(\bar U) d\rho + \partial_m(-\rho s)(\bar U) d m+ \partial_{\mathcal{E}} (-\rho s) (\bar U)  d \mathcal{E} = - s (\bar U) d\rho+ \br d(-s)(\bar U) .
\eeq
To compute $d(-s)(\bar U)$ above, we use the thermodynamic relation \eqref{2nd} with $e=\frac{\mathcal{E}}{\rho} -\frac{|u|^2}{2}$, that is,
\[
\theta ds=  p d\Big(\frac{1}{\rho}\Big) + d\Big(\frac{\mathcal{E}}{\rho} -\frac{|u|^2}{2}\Big) = -\frac{p}{\rho^2}d\rho -\frac{\mathcal{E}}{\rho^2}d\rho +\frac{1}{\rho} d\mathcal{E} - u du.
\]
This together with $\bar u=0$ implies
\[
\bar\theta d(-s)(\bar U) = \frac{\bar p+\bar{\mathcal{E}}}{\br^2}d\rho - \frac{1}{\br} d\mathcal{E},
\]
which together with \eqref{ints} yields
\[
\bar\theta d(-\rho s)(\bar U) = \Big(\bt(- s) (\bar U)+ \frac{\bar p+\bar{\mathcal{E}}}{\br}\Big) d\rho -  d\mathcal{E},
\]
Thus,
\beq\label{ders}
\bar\theta d(-\rho s)(\bar U)\cdot (U-\bar U) = \Big(\bt(- s) (\bar U)+ \frac{\bar p+\bar{\mathcal{E}}}{\br}\Big) (\rho-\br) - (\mathcal{E}-\bar{\mathcal{E}}) .
\eeq
Therefore, we plug this into \eqref{imss} with \eqref{defs} to obtain
\begin{align*}
\begin{aligned}
\bt (-\rho s)(U|\bar U) &= \bt R \rho\ln\rho -\bt\rho\mathcal{S} (Q_1(\theta)) - \bt R \br\ln\br +\bt\br \mathcal{S} (Q_1(\bt))\\
&\quad +\bt \big(-R\ln\br +\mathcal{S} (Q_1(\bt))\big) (\rho-\br) \underbrace{-\frac{\bar p+\bar{\mathcal{E}}}{\br}(\rho-\br)  +  (\mathcal{E}-\bar{\mathcal{E}}) }_{=:J}.
\end{aligned}
\end{align*}
For $J$, we use $\bar {\mathcal{E}}=\bar\rho\bar e$ and  \eqref{ideal} with $\bar\theta\ge\theta_*$, to find that
\[
\frac{\bar p+\bar{\mathcal{E}}}{\br}= \gamma \bar e ,
\]
which yields
\[
J=- \gamma \bar e (\rho-\br)  + \rho e - \br\bar e +  \frac{|m|^2}{2\rho}.
\]
Since it follows from \eqref{pressure} and \eqref{e} that 
\[
\rho e = \frac{p_e(\rho)}{\gamma-1} + \rho Q_1(\theta),
\]
and especially,
\[
\gamma \bar e = \frac{p_e'(\br)}{\gamma-1} + \gamma Q_1(\bar\theta),
\]
we use \eqref{e} and \eqref{pressure} to have
\begin{align*}
\begin{aligned}
J&= - \frac{p_e'(\br)}{\gamma-1}  (\rho-\br) -\gamma Q_1(\bar\theta)(\rho-\br) + \frac{p_e(\rho)}{\gamma-1}  +\rho Q_1(\theta)-\frac{p_e(\bar\rho)}{\gamma-1}  - \bar\rho Q_1(\bar\theta)+  \frac{|m|^2}{2\rho}\\
&= \frac{1}{\gamma-1} p_e(\rho|\br) -\gamma Q_1(\bar\theta)(\rho-\br) +\rho Q_1(\theta)  - \bar\rho Q_1(\bar\theta)+  \frac{|m|^2}{2\rho}.
\end{aligned}
\end{align*}
Moreover, using the fact that $Q_1(\bar\theta) = \frac{R}{\gamma-1} \bar\theta$ by \eqref{e}, 
we have
\[
J=  \frac{1}{\gamma-1} p_e(\rho|\br) -R \bar\theta(\rho-\br) +\rho \big(Q_1(\theta) - Q_1(\bar\theta) \big)  +  \frac{|m|^2}{2\rho}.
\]
Hence, we have
\begin{align*}
\begin{aligned}
\bt (-\rho s)(U|\bar U) &= \bt R  \Big(\rho\ln\rho- \br\ln\br - (\ln\br +1) (\rho-\br) \Big) \\
&\quad -\bt \rho\Big(\mathcal{S} (Q_1(\theta))  - \mathcal{S} (Q_1(\bt)) -\frac{1}{\bt}\big(Q_1(\theta)-Q_1(\bt)\big)  \Big) +\frac{1}{\gamma-1} p_e(\rho|\br) +\frac{|m|^2}{2\rho} .
\end{aligned}
\end{align*}
Since $\mathcal{S}'(Q_1(\bt))=1/\bt$ by \eqref{SQ}, and $\frac{d(\rho\ln\rho)}{d\rho}=\ln\rho +1$, we have the desired representation \eqref{rel-def}.
\end{proof}

\subsection{Nonlinear functionals controlled by the weighted relative entropy}
We recall the convex function \eqref{def-FG0} and define a nonnegative function $\mathcal{G} $ as follows: 
\beq\label{def-FG}
\mathcal{G} (y):=\min\{|y|, y^2\}.
\eeq

\begin{lemma}\label{lem:newf}
Let $\bar U:=(\bar \rho, \bar m=0, \bar{\mathcal{E}})$ be the contact discontinuity \eqref{contact}, and $U_\pm:=(\rho_\pm, \rho_\pm u_\pm=0, \mathcal{E}_\pm=\bar{\mathcal{E}})$. Then, there exists a constant $C>0$ such that for any solution $U:=(\rho, m, \mathcal{E})$  of \eqref{NSF},
\begin{align}
\begin{aligned}\label{new-left}
\mathcal{F}(\rho) + \mathcal{G}(\mathcal{E}-\bar{\mathcal{E}}) +\frac{|m|^2}{2\rho}
\le C \min \Big\{\theta_- (-\rho s) (U|U_-) ,\theta_+ (-\rho s) (U|U_+)    \Big\} .
\end{aligned}
\end{align}
\end{lemma}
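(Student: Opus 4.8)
The plan is to reduce everything to the explicit formula \eqref{rel-def}. Since the states $U_\pm = (\rho_\pm, 0, \mathcal{E}_\pm = \bar{\mathcal{E}})$ are constants lying in the ideal-gas regime $\theta_\pm > \theta_*$, the computation leading to \eqref{rel-def} applies verbatim with $\bar U$ replaced by $U_\pm$, giving
\[
\theta_\pm (-\rho s)(U|U_\pm) = \theta_\pm R\,(\rho\ln\rho)(\rho|\rho_\pm) + \theta_\pm \rho\,(-\mathcal{S})(Q_1(\theta)|Q_1(\theta_\pm)) + \frac{p_e(\rho|\rho_\pm)}{\gamma-1} + \frac{|m|^2}{2\rho}.
\]
Because the kinetic term $\frac{|m|^2}{2\rho}$ appears identically on both sides of \eqref{new-left}, it suffices to dominate $\mathcal{F}(\rho) + \mathcal{G}(\mathcal{E}-\bar{\mathcal{E}})$ by a constant times the three remaining nonnegative terms, and to carry this out for \emph{both} $U_-$ and $U_+$ so that the minimum on the right-hand side is controlled.

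For the density part I would first record the elementary bound $\mathcal{F}(\rho)\le (\rho-\rho_\pm)^2$, which holds precisely because $\rho_\pm\in\{\rho_*,\rho^*\}$ and $\mathcal{F}$ is the degenerate quadratic vanishing on $[\rho_*,\rho^*]$; this degeneracy is exactly what allows a single $\mathcal{F}$ to be dominated by the relative entropy centered at either endpoint. It then remains to prove the scalar inequality $(\rho-\rho_\pm)^2\,\mathbf{1}_{\rho\notin[\rho_*,\rho^*]} \le C\big[(\rho\ln\rho)(\rho|\rho_\pm) + p_e(\rho|\rho_\pm)\big]$, which I would verify by a three-regime comparison: both relative entropies are comparable to $(\rho-\rho_\pm)^2$ near $\rho_\pm$; they stay bounded below by a positive constant as $\rho\to 0^+$ (indeed $(\rho\ln\rho)(\rho|\rho_\pm)\to\rho_\pm$ and $p_e(\rho|\rho_\pm)\to a\rho_\pm^\gamma(\gamma-1)$); and for $\rho\to\infty$ the term $p_e(\rho|\rho_\pm)\sim a\rho^\gamma/(\gamma-1)$ dominates $(\rho-\rho_\pm)^2\sim\rho^2$ thanks to $\gamma>2$. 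This large-$\rho$ comparison is where the hypothesis on $\gamma$ enters.

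For the energy part I would split $\mathcal{E}-\bar{\mathcal{E}} = \frac{|m|^2}{2\rho} + (\rho e-\bar{\mathcal{E}})$, use that $\mathcal{G}$ is equivalent to the convex $\widetilde{\mathcal{G}}$ and hence satisfies $\mathcal{G}(a+b)\le C(\mathcal{G}(a)+\mathcal{G}(b))$, together with $\mathcal{G}\big(\tfrac{|m|^2}{2\rho}\big)\le \tfrac{|m|^2}{2\rho}$, thereby reducing matters to controlling $\mathcal{G}(\rho e-\bar{\mathcal{E}})$. Writing $\rho e-\bar{\mathcal{E}} = \frac{p_e(\rho)-p_e(\rho_\pm)}{\gamma-1} + \big(\rho Q_1(\theta)-\rho_\pm Q_1(\theta_\pm)\big)$ via $\bar{\mathcal{E}}=\mathcal{E}_\pm=\rho_\pm e_\pm$, I would estimate $\mathcal{G}(\rho e-\bar{\mathcal{E}})$ by a regime analysis in $(\rho,\theta)$. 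The decisive point, and the main obstacle, is the large-temperature regime: there $\mathcal{G}$ grows only linearly, $\mathcal{G}(\rho e-\bar{\mathcal{E}})\sim \frac{R}{\gamma-1}\rho\theta$, and this must be absorbed by the temperature term $\theta_\pm\rho(-\mathcal{S})(Q_1(\theta)|Q_1(\theta_\pm))$, which carries the same $\rho$-weight. Using $\mathcal{S}(Q_1)=\frac{R}{\gamma-1}\ln\theta+\mathrm{const}$ from \eqref{def-s1}, one computes the asymptotics $(-\mathcal{S})(Q_1(\theta)|Q_1(\theta_\pm))\sim Q_1(\theta)/\theta_\pm$ as $\theta\to\infty$, so that $\theta_\pm\rho(-\mathcal{S})(Q_1(\theta)|Q_1(\theta_\pm))\sim \rho Q_1(\theta)=\frac{R}{\gamma-1}\rho\theta$ matches exactly. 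This cancellation is precisely why the capped nonlinearity $\mathcal{G}$, rather than a pure square of $\mathcal{E}-\bar{\mathcal{E}}$, is the correct object: the relative entropy only grows linearly in $\rho\theta$, so a quadratic energy penalty could never be controlled when $\rho$ is small and $\rho\theta$ large.

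The remaining regimes are routine once the large-temperature case is settled: for $(\rho,\theta)$ in a fixed compact set both sides behave quadratically (by smoothness of $\rho e$ and local convexity of the relative entropy); for $\rho\to 0$ with $\rho\theta$ bounded both sides are bounded while the right-hand side is bounded below by a positive constant; and in the cold regime $\theta<\theta_*$ the quantity $Q_1(\theta)=c_*\theta^2$ is uniformly bounded, so only a bounded correction appears. Carrying out the same estimates relative to $U_+$—the density argument being symmetric through the structure of $\mathcal{F}$ and the energy argument through $\bar{\mathcal{E}}=\mathcal{E}_-=\mathcal{E}_+$—and then taking the minimum over the two reference states yields \eqref{new-left}.
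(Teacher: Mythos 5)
Your proposal is correct in substance, but for the decisive term $\mathcal{G}(\mathcal{E}-\bar{\mathcal{E}})$ it takes a genuinely different route from the paper. On the density part the two arguments coincide: both rest on the fact that $\gamma>2$ yields $|\rho-\rho_\pm|^2\le C\,p_e(\rho|\rho_\pm)$ globally, so the degenerate $\mathcal{F}$ is dominated through \eqref{rel-def} by either relative entropy (and your observation that \eqref{rel-def} applies verbatim with $\bar U$ replaced by $U_\pm$ is exactly what the paper uses implicitly, since $\bar U$ equals $U_\mp$ pointwise). For the energy part, however, the paper performs no regime analysis at all: it writes $\mathcal{G}(\mathcal{E}-\mathcal{E}_\pm)=\min\{|\mathcal{E}-\mathcal{E}_\pm|,|\mathcal{E}-\mathcal{E}_\pm|^2\}\le\min\{|U-U_\pm|,|U-U_\pm|^2\}$ and invokes the abstract Lemma~\ref{lem:gene} (the relative functional of a strictly convex function dominates $\min\{|z-z_0|,|z-z_0|^2\}$ globally) applied to $\eta=-\rho s$ in the conserved variables $U$. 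You instead stay with the explicit decomposition \eqref{rel-def}, split off the kinetic part by quasi-subadditivity of $\mathcal{G}$, and match growth rates regime by regime, the key computation being $(-\mathcal{S})(Q_1(\theta)|Q_1(\theta_\pm))\sim Q_1(\theta)/\theta_\pm$ as $\theta\to\infty$, so that the linear growth of $\mathcal{G}$ in $\rho\theta$ is exactly absorbed. What the paper's argument buys is brevity and freedom from case distinctions, at the price of relying on strict convexity of $-\rho s$ as a function of $(\rho,m,\mathcal{E})$ on a convex open set plus the general lemma; what yours buys is an elementary, self-contained argument at the level of explicit formulas (in effect you re-prove the one-dimensional instances of Lemma~\ref{lem:gene} by hand) which makes transparent why the capped functional $\mathcal{G}$, rather than a quadratic, is forced. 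If you write your version in full, two details need attention: your list of regimes omits $\rho\to\infty$ with $\theta$ moderate, which is precisely where the naive replacement of $\mathcal{G}\big(\rho(Q_1(\theta)-Q_1(\theta_\pm))\big)$ by $\rho\,\mathcal{G}\big(Q_1(\theta)-Q_1(\theta_\pm)\big)$ fails pointwise; there one must again borrow the term $p_e(\rho|\rho_\pm)\gtrsim\rho^\gamma$, i.e.\ the same dominance you already use in the density part, so the gap is routine but should be stated.
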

\begin{proof}
First, since $\gamma>2$, there exists a constant $C>0$ such that $|\rho-\rho_\pm|^2 \le C p_e(\rho|\rho_\pm)$ for all $\rho\ge 0$, which together with \eqref{def-FG0} implies
\[
\mathcal{F}(\rho) \le C\min\{p_e(\rho|\rho_+), p_e(\rho|\rho_-)\},\qquad \forall \rho\ge 0.
\]
Thus, it follows from \eqref{rel-def} that
\[
\mathcal{F}(\rho) +\frac{|m|^2}{2\rho} \le C \min \Big\{\theta_- (-\rho s) (U|U_-) ,\theta_+ (-\rho s) (U|U_+)  \Big\}.
\]
Therefore, it remains to prove that
\beq\label{G-claim}
 \mathcal{G}(\mathcal{E}-\bar{\mathcal{E}}) \le C\min \Big\{\theta_- (-\rho s) (U|U_-) ,\theta_+ (-\rho s) (U|U_+)  \Big\}.
\eeq
Indeed, using the following Lemma \ref{lem:gene} together with the convex open set $\Omega=\bbr_+\times\bbr\times\bbr$ and the strict convex function $\eta=-\rho s$, we find that for each $\bar{\mathcal{E}} = \mathcal{E}_\pm$,
\begin{align*}
 \mathcal{G}(\mathcal{E}-\bar{\mathcal{E}}) &= \min \big\{ |\mathcal{E}-\bar{\mathcal{E}}|,  |\mathcal{E}-\bar{\mathcal{E}}|^2 \big\}
 \le \min \big\{ |U-U_\pm|,  |U-U_\pm|^2 \big\} \\
 & \le C  (-\rho s) (U|U_\pm) \le C \theta_\pm (-\rho s) (U|U_\pm),
\end{align*}
which together with $\bar{\mathcal{E}} = \mathcal{E}_- =\mathcal{E}_+$ implies \eqref{G-claim}.
\end{proof}

\begin{lemma}\label{lem:gene}
Let $\Omega$ is a convex open subset of $\bbr^n$. Let $\eta:\Omega\to \bbr$ be a strictly convex function. The relative function $\eta(\cdot|\cdot)$ (defined as in \eqref{gen-e}) satisfies the following:\\
For any $z_0\in\Omega$, there exists a constant $C>0$ such that
\[
\eta(z|z_0)\ge C \min\big\{ |z-z_0|, |z-z_0|^2 \big\} ,\quad\forall z\in\Omega.
\]
\end{lemma}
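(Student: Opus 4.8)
The plan is to prove the two-sided behavior of the bound by splitting $\Omega$ into a neighborhood of $z_0$, where the estimate is quadratic, and the complementary far region (reaching towards $\partial\Omega$ or off to infinity when $\Omega$ is unbounded), where it is only linear; these two regimes glue together to give precisely $\min\{|z-z_0|,|z-z_0|^2\}$. Since $z_0$ is fixed, the constant $C$ is allowed to depend on it. First I would set $d_0:=\mathrm{dist}(z_0,\partial\Omega)>0$, which is positive because $z_0$ is interior, and fix a radius $r_0:=\min\{1,d_0/2\}$, so that the closed ball $\overline{B}(z_0,r_0)$ is a compact subset of $\Omega$.

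For the near regime I would use that $\eta$ is $C^2$ with positive-definite Hessian, which is the operative meaning of strict convexity here and is satisfied by $\eta=-\rho s$ (note that pure strict convexity would not suffice, as $\eta(z)=|z|^4$ shows). On the compact set $\overline{B}(z_0,r_0)$ the continuous Hessian is bounded below, say $D^2\eta\geq\lambda\,\mathrm{Id}$ with $\lambda>0$, and Taylor's formula with integral remainder gives
\[
\eta(z|z_0)=\int_0^1 (1-s)\,(z-z_0)^\top D^2\eta\big(z_0+s(z-z_0)\big)(z-z_0)\,ds\ \geq\ \tfrac{\lambda}{2}\,|z-z_0|^2
\]
for every $z\in\overline{B}(z_0,r_0)$.

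The far regime is the step I expect to be the main obstacle, since $\Omega$ may be unbounded and $\eta$ is only strictly, not uniformly, convex, so no quadratic bound can persist at large distances; the key is that convexity alone still forces linear growth. I would reduce to one dimension along rays: for $z\in\Omega$ write $z=z_0+r\omega$ with $|\omega|=1$ and $r=|z-z_0|$, and set $\phi_\omega(t):=\eta(z_0+t\omega\,|\,z_0)$, which is convex in $t$ on the segment lying in $\Omega$ (convex set) with $\phi_\omega(0)=0$. Using the elementary fact that $t\mapsto\phi_\omega(t)/t$ is nondecreasing for a convex function vanishing at the origin, together with the near estimate $\phi_\omega(r_0)\geq\tfrac{\lambda}{2}r_0^2$ (valid because $r_0<d_0$ forces $z_0+r_0\omega\in\Omega$), I obtain for all $r\geq r_0$
\[
\eta(z|z_0)=\phi_\omega(r)\ \geq\ \frac{\phi_\omega(r_0)}{r_0}\,r\ \geq\ \frac{\lambda r_0}{2}\,|z-z_0|.
\]

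Finally I would combine the two estimates with $C:=\tfrac{\lambda r_0}{2}$ and $r_0\leq 1$. For $|z-z_0|\leq r_0$ the near bound gives $\eta(z|z_0)\geq\tfrac{\lambda}{2}|z-z_0|^2\geq C|z-z_0|^2$; on the overlap $r_0\leq|z-z_0|\leq 1$ the linear bound gives $\eta(z|z_0)\geq C|z-z_0|\geq C|z-z_0|^2$ since $|z-z_0|\leq 1$; and for $|z-z_0|\geq 1$ the linear bound gives $\eta(z|z_0)\geq C|z-z_0|$. Since $\min\{|z-z_0|,|z-z_0|^2\}$ equals $|z-z_0|^2$ for $|z-z_0|\leq 1$ and $|z-z_0|$ for $|z-z_0|\geq 1$, this is exactly $\eta(z|z_0)\geq C\min\{|z-z_0|,|z-z_0|^2\}$, which completes the proof.
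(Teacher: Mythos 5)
Your proof is correct, and while it shares the paper's overall skeleton (reduction to rays from $z_0$ plus a near/far splitting), the mechanisms differ enough to be worth comparing. For the far regime the paper works with the derivative of $f_v(t):=\eta(z_0+tv\,|\,z_0)$, $|v|=r$: strict convexity and $f_v'(0)=0$ give $f_v'(1)>0$, compactness of the sphere $\{|v|=r\}$ and continuity of $d\eta$ upgrade this to a uniform bound $f_v'(1)\ge C(r)$, and integrating the nondecreasing $f_v'$ yields $f_v(t)\ge \tfrac{C(r)}{2}t$ for $t\ge 2$. You instead obtain the linear far bound directly from the quadratic near bound via the monotonicity of the secant slope $t\mapsto \phi_\omega(t)/t$ of a convex function vanishing at the origin; this avoids any separate compactness argument, since uniformity in the direction $\omega$ is already encoded in the Hessian lower bound $\lambda$. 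In the near regime the roles are reversed: the paper simply asserts that strict convexity gives $\eta(z|z_0)\ge C_r|z-z_0|^2$ for $|z-z_0|\le 2r$, while you prove it by Taylor's formula with integral remainder and the Hessian bounded below on a compact ball. Your observation that bare strict convexity would not suffice ($\eta(z)=|z|^4$ at $z_0=0$ satisfies the hypotheses as literally stated but violates the conclusion near $z_0$) is accurate and pinpoints exactly the step where the paper's proof tacitly uses more than it assumes; both arguments really require $\eta\in C^2$ with locally positive definite Hessian, which does hold for $\eta=-\rho s$ where the lemma is applied. Your explicit three-region gluing with $r_0\le 1$ is also tidier than the paper's implicit matching at radius $2r$. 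In short, the paper's route needs only first-order (gradient) information for the far field but leaves the near bound unproved, while yours is fully self-contained at the price of invoking second-order regularity throughout.
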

\begin{proof}
Since $\Omega$ is open, for any fixed $z_0\in\Omega$ there exists a constant $r>0$ such that
\[
\{z~|~|z-z_0|=r\}\subset \Omega.
\]
For any vector $v\in\bbr^n$ such that $|v|=r$, we define a non-negative function $f_v:\bbr\to\bbr_+$ by
\[
f_v(t):= \eta(z_0+t v ~|~ z_0).
\]
Then,
\[
f_v'(t) = \Big( d\eta(z_0+ t v) - d\eta(z_0)  \Big) \cdot v.
\]
Since $\eta$ is strictly convex on the convex set $\Omega$, $f_v''(t)= v \big(d^2 \eta)(z_0+tv)v^t>0$ on the interval $I=\{t~|~z_0+tv\in\Omega\}$, which implies that
$f_v$ is strictly convex on $I$.\\
Since $f_v'(0)=0$, we have
\[
\Big( d\eta(z_0+ v) - d\eta(z_0)  \Big) \cdot v = f_v'(1)>0.
\]
Moreover, since $v\mapsto\Big( d\eta(z_0+ v) - d\eta(z_0)  \Big) \cdot v$ is continuous on the compact set $|v|=r$, there exists a constant $C(r)$ such that
\[
f_v'(1) \ge C(r)\quad \forall v~ \mbox{with}~|v|=r.
\]
Thus, for all $v$ with $|v|=r$, 
\[
 f_v' (t) \ge  C(r)\quad\forall ~t\ge 1,
\] 
which implies that for all $v$ with $|v|=r$,
\[
\forall t\ge 2,\quad  \quad f_v (t) = f_v(1) +\int_1^t f_v' \ge  \int_1^t f_v' \ge C(r) (t-1) \ge \frac{C(r)}{2} t.
\] 
Now, for any $z\in\Omega$ with $|z-z_0|\ge 2r$, by putting $v=\frac{z-z_0}{|z-z_0|}r$ and $t=\frac{|z-z_0|}{r}$ above, we have
\[
\eta(z|z_0) = f_v(t) \ge  \frac{C(r)}{2r} |z-z_0|.
\]
On the other hand, the strict convexity of $\eta$ and the definition of the relative function imply that there exists a constant $C_r>0$ such that for any $z\in\Omega$ with $|z-z_0|\le 2r$,
\[
\eta(z|z_0) \ge C_r |z-z_0|^2.
\]
Those two estimates imply the desired estimate.
\end{proof}

\section{Proof of Theorem \ref{thm:main}}
\setcounter{equation}{0}
As explained in Section \ref{sec:idea}, we will construct the $\psi$ function transported by the velocity field, with regularizations.

\subsection{Construction of shift functions}
For the velocity $u^{\kappa,\nu}$ being the weak solution to \eqref{NSF}, we first consider a family of spatial-mollifications of $u$ as
\[
\bar u^{\kappa,\nu}_\delta(x,t) := \int_{\bbr^3} u^{\kappa,\nu}(x-y,t) \eta_\delta (y) dy,
\]
where $\eta_\delta$ denotes the mollifier defined by
\beq\label{def-time}
\eta_\delta(x):=\frac{1}{\delta^3}\eta \big(\frac{|x|}{\delta}\big)\quad\mbox{for any }\delta>0,
\eeq
for a non-negative smooth function $\eta:\bbr\to\bbr$ such that  $\int_{\bbr}\eta=1$.\\
Note that since $u$ is periodic in $x$, so is $\bar u^{\kappa,\nu}_\delta$. Indeed, for each $i=1,2,3$,
\[
\bar u^{\kappa,\nu}_\delta(x+e_i,t) =  \int_{\bbr^3}\bar u^{\kappa,\nu}(x+e_i-y,t) \eta_\delta (y) dy= \int_{\bbr^3} \bar u^{\kappa,\nu}(x-y,t) \eta_\delta (y) dy =\bar u^{\kappa,\nu}_\delta(x,t),
\]
where and in the sequel $e_i~(i=1,2,3)$ is the standard unit vector.
For the smooth velocity field $\bar u^{\kappa,\nu}_\delta$, we define $\psi^{\kappa,\nu}_\delta$ as the unique solution to the following transport equation:
\begin{equation}\label{psi-eq}
\left\{
\begin{array}{lll}
      \displaystyle \partial_t \psi + \bar u^{\kappa,\nu}_\delta \cdot \nabla\psi =0,\\
       \displaystyle \psi(x,0) =: \psi_0(x) =\left\{ \begin{array}{ll} 0, \quad \mbox{if } 0<x_1<\frac{1}{2},  \\1, \quad\mbox{if }  \frac{1}{2}<x_1<1, \end{array} \right.  \quad x=(x_1,x_2,x_3)\in\bbt^3 .
\end{array}
\right.
\end{equation}
For any fixed $(x,t)\in\bbr^3\times[0,T]$, we define a characteristic curve $X(\tau;x,t)$ generated by $u^{\kappa,\nu}_\delta$, passing through $x$ at $\tau=t$ as follows:
\begin{equation}\label{char}
\left\{
\begin{array}{lll}
      \displaystyle
       \frac{d}{d\tau} X(\tau;x,t) =\bar u^{\kappa,\nu}_\delta (X(\tau;x,t),\tau),\\[3mm]
       \displaystyle X(\tau=t;x,t) =x.
\end{array}
\right.
\end{equation}
Note that since $\bar u^{\kappa,\nu}_\delta$ is smooth in $x$ and $\|\bar u^{\kappa,\nu}_\delta\|_{C^1(\bbt^3)} \in L^2(0,T)$, the above ODE has a unique absolutely continuous solution $X(\tau;x,t)$ on $\tau\in[0,T]$.\\
Then, since it follows from \eqref{psi-eq} and \eqref{char} that
\beq\label{exppsi}
\psi^{\kappa,\nu}_\delta(x,t) = \psi_0(X(0;x,t)),
\eeq
it holds that
\beq\label{psi-bdd}
0\le \psi^{\kappa,\nu}_\delta(x,t)\le 1,\quad\forall (x, t)\in\mathbb{T}^3\times[0,T].
\eeq
\begin{remark}
Note that the solution $\psi^{\kappa,\nu}_\delta(x,t)$ is periodic in $x$. Indeed, 
since $\bar u^{\kappa,\nu}_\delta$ is periodic in $x\in\mathbb{T}^3$, it follows from \eqref{char} that
for each $i=1,2,3$,
\[
\frac{d}{d\tau} \big( X(\tau;x+e_i,t) -e_i \big)=\frac{d}{d\tau} X(\tau;x+e_i,t) =\bar u^{\kappa,\nu}_\delta (X(\tau;x+e_i,t),\tau) =\bar u^{\kappa,\nu}_\delta \big(X(\tau;x+e_i,t)-e_i, \tau\big).
\]
Thus, we find that for each $i=1,2,3$,
\begin{equation*}
\left\{
\begin{array}{lll}
      \displaystyle \frac{d}{d\tau} \big( X(\tau;x+e_i,t) -e_i\big) =\bar u^{\kappa,\nu}_\delta \big(X(\tau;x+e_i,t)-e_i, \tau\big) ,\\[3mm]
       \displaystyle \big( X(\tau;x+e_i,t) -e_i\big)|_{\tau=t} =x.
\end{array}
\right.
\end{equation*}
Therefore, by the uniqueness of solutions to \eqref{char}, for each $i=1,2,3$,
\[
X(\tau;x,t)= X(\tau;x+e_i,t) -e_i,\quad\forall \tau\in [0,T].
\]
This together with the periodicity of $\psi_0$ implies
\begin{align*}
\begin{aligned}
  \psi^{\kappa,\nu}_\delta(x+e_i,t)=\psi_0 (X(0;x+e_i,t)) = \psi_0 (X(0;x+e_i,t)-e_i) = \psi_0(X(0;x,t)) =\psi^{\kappa,\nu}_\delta(x,t) .
\end{aligned}
\end{align*}
Hence, $\psi^{\kappa,\nu}_\delta(x,t)$ is periodic in $x\in\mathbb{T}^3$.
\end{remark}

Since the solution $\psi^{\kappa,\nu}_\delta$ to the equation \eqref{psi-eq} is still discontinuous, we consider a family of mollifications of $\psi^{\kappa,\nu}_\delta$ as
\beq\label{psi-mol}
\bar \psi^{\kappa,\nu}_{\delta,\eps}(x,t) := \int_{\bbr^3} \psi^{\kappa,\nu}_\delta(x-y,t) \eta_\eps (y) dy,
\eeq
where $\eta_\eps$ denotes the mollifier defined by in \eqref{def-time} with the parameter $\delta$ replaced by another parameter $\eps$.\\
Note that by \eqref{psi-bdd},
\beq\label{psi-be}
0\le \bar \psi^{\kappa,\nu}_{\delta,\eps}(x,t)\le 1,\quad\forall (x, t), ~\forall \eps>0.
\eeq

\subsection{Weighted entropy inequality}
\begin{lemma}
Let $U:=(\rho, m, \mathcal{E})$ be a solution of \eqref{NSF} in the sense of Definition \ref{def-sol}, and $\Psi:=\theta_- \big(1-\bar \psi^{\kappa,\nu}_{\delta,\eps}\big) +\theta_+ \bar \psi^{\kappa,\nu}_{\delta,\eps}$. Then,
\begin{align}
\begin{aligned}\label{went1}
&\int_{\bbt^3} (-\rho s)(U(x,t)) \Psi (x,t) dx-\int_{\bbt^3} (-\rho s)(U_0)\Psi  (x,0) dx \\
&\quad  +\min\{\theta_+,\theta_-\} \int_0^t \int_{\bbt^3} \Big( \kappa\frac{ |\nabla \theta|^2}{\theta^2} + \frac{\mathbb{S}:\nabla u}{\theta} \Big)(x,\tau)  dxd\tau \\
&\quad + \kappa (\theta_+-\theta_-) \int_0^t \int_{\bbt^3} \frac{\nabla\theta}{\theta} \cdot \nabla\bar \psi^{\kappa,\nu}_{\delta,\eps}  dxd\tau \\
&\quad+(\theta_+-\theta_-)\int_0^t\int_{\bbt^3}  (\rho s)(U(x,\tau))\Big( \partial_\tau \bar \psi^{\kappa,\nu}_{\delta,\eps} + u^{\kappa,\nu}\cdot\nabla \bar \psi^{\kappa,\nu}_{\delta,\eps}\Big) (x,\tau)  dxd\tau \leq0
\end{aligned}
\end{align}
\end{lemma}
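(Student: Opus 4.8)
The plan is to obtain \eqref{went1} by inserting the smooth weight $\Psi=\theta_-\big(1-\bar\psi^{\kappa,\nu}_{\delta,\eps}\big)+\theta_+\bar\psi^{\kappa,\nu}_{\delta,\eps}=\theta_-+(\theta_+-\theta_-)\bar\psi^{\kappa,\nu}_{\delta,\eps}$, multiplied by a temporal cut-off, directly as the nonnegative test function in the entropy dissipation inequality \eqref{ent-ineq}. Two features of $\Psi$ make this legitimate: first, $\bar\psi^{\kappa,\nu}_{\delta,\eps}$ is produced by the spatial mollification \eqref{psi-mol}, hence is $C^\infty$ in $x$, while its time regularity is inherited from the transport equation \eqref{psi-eq} (so that $\partial_\tau\bar\psi^{\kappa,\nu}_{\delta,\eps}$ is well defined); second, since $0\le\bar\psi^{\kappa,\nu}_{\delta,\eps}\le 1$ and $\theta_\pm>0$, one has the pointwise bound $\min\{\theta_+,\theta_-\}\le\Psi\le\max\{\theta_+,\theta_-\}$, so that $\Psi\ge 0$ and $\varphi:=\Psi\,\phi_t$ is an admissible nonnegative test weight, where $\phi_t$ is a smooth decreasing approximation of $\mathbf 1_{[0,t]}$.

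First I would substitute $\varphi=\Psi\,\phi_t$ into \eqref{ent-ineq} and expand $\varphi_\tau=\Psi_\tau\phi_t+\Psi\,\phi_t'$ and $\nabla\varphi=\nabla\Psi\,\phi_t$. Letting $\phi_t\uparrow\mathbf 1_{[0,t]}$, so that $\phi_t'$ concentrates as a negative mass at $\tau=t$, the term $\int\rho s\,\Psi\,\phi_t'$ produces the time-$t$ boundary contribution $-\int_{\bbt^3}(\rho s)(U(\cdot,t))\Psi(\cdot,t)$, which together with the initial term $\int_{\bbt^3}(\rho s)(x,0)\Psi(x,0)$, controlled through the compatibility conditions \eqref{comp-ent} and \eqref{ini-econ}, yields, when rewritten with the mathematical entropy $-\rho s$, exactly the first line of \eqref{went1}. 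Using the two identities $\Psi_\tau=(\theta_+-\theta_-)\partial_\tau\bar\psi^{\kappa,\nu}_{\delta,\eps}$ and $\nabla\Psi=(\theta_+-\theta_-)\nabla\bar\psi^{\kappa,\nu}_{\delta,\eps}$, the transport terms $\rho s(\varphi_\tau+u\cdot\nabla\varphi)$ collapse to the last line of \eqref{went1}, carrying the genuine velocity $u^{\kappa,\nu}$ of the convective entropy flux, while the heat-flux term $-\kappa\frac{\nabla\theta}{\theta}\cdot\nabla\varphi$ produces, through the same expression for $\nabla\Psi$, the cross term on the third line.

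It then remains to treat the $\Psi$-weighted entropy production $\big(\kappa\frac{|\nabla\theta|^2}{\theta^2}+\frac{\nu\mathbb S:\nabla u}{\theta}\big)\Psi$. The heat contribution is a perfect square, and the viscous contribution is nonnegative because, by \eqref{ass-mu}, $\mathbb S:\nabla u=\tfrac{\mu(\theta)}{2}\,|\nabla u+(\nabla u)^\top|^2+\lambda(\theta)(\div u)^2\ge 0$ under the structural constraints $\mu_1>0$ and $2\mu_1+3\lambda_1>0$; hence the whole production is pointwise $\ge 0$. Combining this with $\Psi\ge\min\{\theta_+,\theta_-\}$ lets me bound this term from below by $\min\{\theta_+,\theta_-\}\big(\kappa\frac{|\nabla\theta|^2}{\theta^2}+\frac{\nu\mathbb S:\nabla u}{\theta}\big)$; since this group enters the inequality with a favorable sign, replacing it by the smaller quantity preserves the $\le 0$ and yields the second line, completing \eqref{went1}.

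The step I expect to be the main obstacle is the rigorous passage to a fixed (almost every) time $t$ from the merely distributional inequality \eqref{ent-ineq}: one must justify the limit $\phi_t\uparrow\mathbf 1_{[0,t]}$ for a weak solution, controlling the time-boundary terms through the uniform bound \eqref{pri-ent} and the weak-in-time continuity \eqref{ini-den}, \eqref{ini-ene} together with the $\limsup$ compatibility relations, so that $\tfrac1h\int_t^{t+h}\!\!\int_{\bbt^3}(\rho s)\Psi\to\int_{\bbt^3}(\rho s)(\cdot,t)\Psi(\cdot,t)$ holds at Lebesgue points $t$. A secondary technical point is to confirm the time regularity of $\bar\psi^{\kappa,\nu}_{\delta,\eps}$ inherited from \eqref{psi-eq}, so that $\Psi$ is a genuine admissible test function, approximating in $\tau$ by smooth functions if necessary.
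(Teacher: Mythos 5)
Your proposal is correct and follows essentially the same route as the paper: both insert $\Psi$ multiplied by a smooth temporal cutoff approximating $\mathbf 1_{[0,t]}$ into the entropy inequality \eqref{ent-ineq}, recover the two time-boundary terms via the compatibility conditions \eqref{comp-ent}, \eqref{ini-econ}, identify the transport and heat-flux terms through $\Psi_\tau=(\theta_+-\theta_-)\partial_\tau\bar\psi^{\kappa,\nu}_{\delta,\eps}$ and $\nabla\Psi=(\theta_+-\theta_-)\nabla\bar\psi^{\kappa,\nu}_{\delta,\eps}$, and bound the entropy production from below using its pointwise nonnegativity (from \eqref{ass-mu}) together with $\Psi\ge\min\{\theta_+,\theta_-\}$. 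The only cosmetic difference is that your cutoff equals $1$ at $\tau=0$ so the initial term enters directly, whereas the paper's $\varphi_{t,\zeta}$ vanishes at $\tau=0$ and recovers the initial term in the limit $\zeta\to0$; the technical caveats you flag (a.e.-in-$t$ passage and the time regularity of $\bar\psi^{\kappa,\nu}_{\delta,\eps}$) are handled at the same level of rigor in the paper.
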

\begin{proof}
Let $\phi:\bbr\to\bbr$ be a non-negative smooth function such that $\phi (s)=\phi(-s)$, $\int_{\bbr}\phi=1$ and $supp\ \phi = [-1,1]$, and let
\[
\phi_\zeta(s):=\frac{1}{\zeta}\phi \big(\frac{s-\zeta}{\zeta}\big)\quad\mbox{for any } \zeta>0.
\]
Then for a given $t\in(0,T)$, and any $\zeta<t/2$, we define a non-negative smooth function
\beq\label{time-mol}
\varphi_{t, \zeta} (s) := \int_0^s \Big(\phi_\zeta (z) -\phi_\zeta (z- t)  \Big) dz.
\eeq

For test functions of the entropy inequality \eqref{ent-ineq}, we consider a family of non-negative functions
\[
\vec{\varphi}(x,\tau) :=\Psi (x,\tau) \varphi_{t, \zeta} (\tau),
\]
where $\Psi :=\theta_- \big(1-\bar \psi^{\kappa,\nu}_{\delta,\eps} \big) +\theta_+ \bar \psi^{\kappa,\nu}_{\delta,\eps}$.\\
Then, it follows from \eqref{ent-ineq} and $\varphi_{t, \zeta} (0)=0$ that
\[
 \int_0^\infty\int_{\bbt^3} (-\rho s)(U(x,\tau))  \Psi  (x,\tau) \big(-\varphi_{t, \zeta} '(\tau) \big) dxd\tau +J_\zeta^1+J_\zeta^2+J_\zeta^3\le 0,
\]
where
\begin{align*}
\begin{aligned}
& J_\zeta^1:= \int_0^\infty \int_{\bbt^3} \Big( \kappa\frac{ |\nabla \theta|^2}{\theta^2} + \nu\frac{\mathbb{S} : \nabla_x u}{\theta} \Big)(x,\tau) \Psi  (x,\tau)  \varphi_{t, \zeta} (\tau) dxd\tau , \\
&J_\zeta^2:= - \int_0^\infty\int_{\bbt^3} \kappa \frac{\nabla\theta}{\theta}  \cdot \nabla \Psi  (x,\tau)  \varphi_{t, \zeta} (\tau)  dxd\tau , \\
&J_\zeta^3:= - (\theta_+-\theta_-) \int_0^\infty \int_{\bbt^3}  (-\rho s)(U(x,\tau))\Big( \partial_\tau\bar \psi^{\kappa,\nu}_{\delta,\eps} + u\cdot\nabla \bar \psi^{\kappa,\nu}_{\delta,\eps} \Big) (x,\tau)  \varphi_{t, \zeta} (\tau)  dxd\tau .
\end{aligned}
\end{align*}
First, since $\Psi \in C([0,T];C^\infty(\bbt^3))$ and $\varphi_{t, \zeta} '(\tau) =  \phi_\zeta (\tau)-\phi_\zeta (\tau- t)$,
\eqref{comp-ent} and \eqref{ini-econ} implies that
as $\zeta\to0$,
\[
 \int_0^\infty\int_{\bbt^3} (-\rho s)(U(x,\tau)) \Psi  (x,\tau) \big(-\varphi_{t, \zeta} '(\tau) \big) dxd\tau
 \to \int_{\bbt^3} (-\rho s)(U(x,t)) \Psi  (x,t) dx-\int_{\bbt^3} (-\rho s)(U_0)\Psi  (x,0) dx.
\]
Likewise, as $\zeta\to0$,
\begin{align*}
\begin{aligned}
& J_\zeta^1\to \int_0^t \int_{\bbt^3} \Big( \kappa\frac{ |\nabla \theta|^2}{\theta^2} + \nu\frac{\mathbb{S} : \nabla u}{\theta} \Big)(x,\tau) \Psi  (x,\tau) dxd\tau =: J_1 , \\
&J_\zeta^2\to - \int_0^t \int_{\bbt^3} \kappa \frac{\nabla\theta}{\theta}  \cdot \nabla \Psi  (x,\tau)    dxd\tau =: J_2, \\
&J_\zeta^3\to - (\theta_+-\theta_-) \int_0^t \int_{\bbt^3}  (-\rho s)(U(x,\tau)) \Big( \partial_\tau\bar \psi^{\kappa,\nu}_{\delta,\eps} + u\cdot\nabla \bar \psi^{\kappa,\nu}_{\delta,\eps} \Big) (x,\tau)  dxd\tau .
\end{aligned}
\end{align*}
In particular, using the assumptions \eqref{ass-mu} together with \eqref{psi-be}, we can show 
\begin{align*}
\begin{aligned}
J_1& =  \int_0^t \int_{\bbt^3} \Big( \kappa\frac{ |\nabla \theta|^2}{\theta^2} + \nu |\nabla u|^2 \Big)(x,\tau) \Psi  (x,\tau) dxd\tau\\
&\quad \ge \min\{\theta_+,\theta_-\}\int_0^t \int_{\bbt^3} \Big( \kappa\frac{ |\nabla \theta|^2}{\theta^2} + \nu |\nabla u|^2 \Big)(x,\tau)  dxd\tau.
\end{aligned}
\end{align*}
Moreover, since
\[
J_2 =  \kappa (\theta_+-\theta_-) \int_0^t \int_{\bbt^3} \frac{\nabla\theta}{\theta} \cdot \nabla\bar \psi^{\kappa,\nu}_{\delta,\eps}  dxd\tau,
\]
we have the desired estimate.
\end{proof}

\subsection{Weighted relative entropy inequality}

\begin{lemma}\label{lem:wr}
Let $\bar U:=(\bar \rho, \bar m, \bar{\mathcal{E}})$ be the planar contact discontinuity \eqref{contact}. Then,
there exist constants $C>0$, $C_1, C_2$ such that for a solution $U:=(\rho, m, \mathcal{E})$  of \eqref{NSF} in the sense of Definition \ref{def-sol}, we have
\begin{align}
\begin{aligned}\label{went-ineq}
&\int_{\bbt^3} \Big[ \big(1-\bar \psi^{\kappa,\nu}_{\delta,\eps} \big) \theta_- (-\rho s) (U|U_-)  +\bar \psi^{\kappa,\nu}_{\delta,\eps}  \theta_+ (-\rho s) (U|U_+)  \Big](x,t) dx\\
&\qquad  +\min\{\theta_+,\theta_-\}  \int_0^t \int_{\bbt^3} \Big( \kappa\frac{ |\nabla \theta|^2}{\theta^2} + \nu |\nabla u|^2 \Big)(x,\tau)  dxd\tau\\
&\quad \le C\eps+ C\int_{\bbt^3} (-\rho s) (U_0|\bar U) dx +\kappa C \int_0^t \int_{\bbt^3} \frac{|\nabla\theta|}{\theta} |\nabla \bar \psi^{\kappa,\nu}_{\delta,\eps}|   dxd\tau \\
&\qquad+\int_0^t\int_{\bbt^3} \Big(C_1 \rho + C_2 (-\rho s)(U) \Big)\Big( \partial_\tau\bar \psi^{\kappa,\nu}_{\delta,\eps} + u^{\kappa,\nu}\cdot\nabla \bar \psi^{\kappa,\nu}_{\delta,\eps} \Big) (x,\tau)  dxd\tau .
\end{aligned}
\end{align}
\end{lemma}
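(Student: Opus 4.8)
The plan is to start from the weighted entropy inequality \eqref{went1} and convert the bare entropy $(-\rho s)(U)$ into the two relative entropies $(-\rho s)(U|U_\pm)$ that appear on the left of \eqref{went-ineq}. Writing $\psi:=\bar\psi^{\kappa,\nu}_{\delta,\eps}$ and $\Psi=\theta_-(1-\psi)+\theta_+\psi$, I would decompose, for each sign,
\[
\theta_\pm(-\rho s)(U)=\theta_\pm(-\rho s)(U|U_\pm)+\theta_\pm(-\rho s)(U_\pm)+\theta_\pm\,d(-\rho s)(U_\pm)\cdot(U-U_\pm),
\]
and read off the last, affine-in-$U$, piece from \eqref{ders}: with $\bar U=U_\pm$ it equals $a_\pm(\rho-\rho_\pm)-(\mathcal{E}-\bar{\mathcal{E}})$, where $a_\pm:=\theta_\pm(-s)(U_\pm)+\frac{\bar p+\bar{\mathcal{E}}}{\rho_\pm}$. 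Thus $\int_{\bbt^3}(-\rho s)(U)\Psi\,dx=A(t)+[\mathrm{affine}](t)$, where $A(t)$ is exactly the first line of \eqref{went-ineq} and the affine part collects the constants $\theta_\pm(-\rho_\pm s(U_\pm))$, a term linear in $\rho$ with weight $g:=a_-+(a_+-a_-)\psi$, and the single energy term $-\int_{\bbt^3}(\mathcal{E}-\bar{\mathcal{E}})$. Performing the same decomposition at $t=0$ turns the first two terms of \eqref{went1} into $A(t)-A(0)+[\mathrm{affine}](t)-[\mathrm{affine}](0)$.

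The decisive point is the cancellation of the affine contributions that are not linear in $\rho$. Grouping each constant block $\theta_\pm(-\rho_\pm s(U_\pm))$ with $-a_\pm\rho_\pm$ and using the identity $a_\pm\rho_\pm=\theta_\pm(-\rho_\pm s(U_\pm))+\bar p+\bar{\mathcal{E}}$ (an immediate consequence of the definition of $a_\pm$), one finds the resulting coefficient equals $-(\bar p+\bar{\mathcal{E}})$ for \emph{both} signs. This is where the special structure of the contact discontinuity, namely $p_-=p_+=\bar p$ and $\mathcal{E}_-=\mathcal{E}_+=\bar{\mathcal{E}}$, is essential: the two $\psi$-weighted constant blocks coincide, their $\psi$-dependence drops out, and this portion of the affine functional is \emph{constant in time}. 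What survives is the $\rho$-linear term $\int_{\bbt^3}g\rho\,dx$ and the energy term. For the former I would test the weak continuity equation \eqref{mass-con} against $g$ (after the standard temporal truncation used in the proof of \eqref{went1}); since $a_\pm$ are constants, $\partial_\tau g+u\cdot\nabla g=(a_+-a_-)(\partial_\tau\psi+u\cdot\nabla\psi)$, so the difference becomes $(a_+-a_-)\int_0^t\!\int_{\bbt^3}\rho(\partial_\tau\psi+u\cdot\nabla\psi)$, producing precisely the $C_1\rho(\partial_\tau\psi+u\cdot\nabla\psi)$ term with $C_1=a_--a_+$. The energy part yields $\int_{\bbt^3}\mathcal{E}(t)-\int_{\bbt^3}\mathcal{E}(0)\le0$ by \eqref{energy-d}, so it may be dropped with the correct sign.

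It remains to control $A(0)$ and to transfer the last two terms of \eqref{went1}. The $\kappa$-term is moved to the right and estimated by $\kappa C\int_0^t\!\int_{\bbt^3}\frac{|\nabla\theta|}{\theta}|\nabla\psi|$, while the $\rho s$-term gives $(\theta_+-\theta_-)\int_0^t\!\int_{\bbt^3}(-\rho s)(U)(\partial_\tau\psi+u\cdot\nabla\psi)$, i.e.\ the $C_2(-\rho s)(U)$ term with $C_2=\theta_+-\theta_-$; the viscous term is kept on the left through the coercivity furnished by \eqref{ass-mu}. For $A(0)$ I would use that $\psi(\cdot,0)=\bar\psi^{\kappa,\nu}_{\delta,\eps}(\cdot,0)$ coincides with the jump $\psi_0$ outside the slab $\{|x_1-\tfrac12|\le\eps\}$, where the integrand reduces to $\theta_\mp(-\rho s)(U_0|\bar U)\le C(-\rho s)(U_0|\bar U)$. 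Inside the slab, which has measure $O(\eps)$ and lies in the neighborhood $\Omega$ once $\eps$ is small, assumption \eqref{bdd-near} together with Lemma \ref{lem:gene} bounds $|U_0-\bar U|$, hence both $(-\rho s)(U_0|U_\pm)$, by constants, contributing $C\eps$. Therefore $A(0)\le C\int_{\bbt^3}(-\rho s)(U_0|\bar U)\,dx+C\eps$, and collecting all pieces yields \eqref{went-ineq}.

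The main obstacle is the algebraic identity of the second paragraph: after splitting each weighted entropy into a relative entropy plus an affine remainder, one must verify that every term not of the transported form $(\partial_\tau+u\cdot\nabla)\psi$ either cancels (the constant and $\psi$-weighted constant blocks, via $p_\pm=\bar p$ and $\mathcal{E}_\pm=\bar{\mathcal{E}}$) or carries a favorable sign (the energy term, via \eqref{energy-d}). The secondary technical point is the $O(\eps)$ control of $A(0)$ on the mollification slab, which is exactly where the local boundedness hypothesis \eqref{bdd-near} is used.
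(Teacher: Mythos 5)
Your proposal is correct and follows essentially the same route as the paper's proof: the same splitting of the weighted entropy into relative entropies plus an affine remainder computed from \eqref{ders}, the same cancellation of the constant blocks via $p_-=p_+=\bar p$ and $\mathcal{E}_-=\mathcal{E}_+=\bar{\mathcal{E}}$, the same testing of the continuity equation \eqref{mass-con} with the $\psi$-weighted constant $g$ to produce the $C_1\rho$-term, the energy inequality \eqref{energy-d} to discard the energy difference, and the same $O(\eps)$ estimate of the initial weighted relative entropy on the mollification slab using \eqref{bdd-near}. (Your $a_\pm$ are the paper's $\beta_\pm$, and your $C_1=a_--a_+$ versus the paper's $C_1=\beta_+-\beta_-$ is an immaterial sign difference, since the lemma permits $C_1$ of either sign.)
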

\begin{proof}
First, using the definition of the relative functional, we have
\begin{align*}
\begin{aligned}
L(t)&:=\int_{\bbt^3} \Big[ \big(1-\bar \psi^{\kappa,\nu}_{\delta,\eps} \big) \theta_- (-\rho s) (U|U_-)  +\bar \psi^{\kappa,\nu}_{\delta,\eps}  \theta_+ (-\rho s) (U|U_+)  \Big](x,t) dx\\
&=\int_{\bbt^3} \Psi  (x,t) (-\rho s)(U(x,t))  dx \\
&\quad-\underbrace{\int_{\bbt^3} \big(1-\bar \psi^{\kappa,\nu}_{\delta,\eps} \big) \Big(  \theta_- (-\rho s) (U_-) +  \theta_- d(-\rho s)(U_-)\cdot (U-U_-) \Big)dx}_{=:J_1}\\
&\quad - \underbrace{\int_{\bbt^3} \bar \psi^{\kappa,\nu}_{\delta,\eps} \Big(  \theta_+ (-\rho s) (U_+) +  \theta_+ d(-\rho s)(U_+)\cdot (U-U_+) \Big)dx}_{=:J_2}.
\end{aligned}
\end{align*}
Then, using \eqref{ders},
\begin{align*}
\begin{aligned}
J_1&=\int_{\bbt^3} \big(1-\bar \psi^{\kappa,\nu}_{\delta,\eps} \big) \Big[ \theta_- \rho_- (-s)(U_- ) +\Big(\theta_- (- s) ( U_- )+ \frac{ p_- + \mathcal{E}_- }{\rho_- }\Big) (\rho-\rho_- ) - (\mathcal{E}-\mathcal{E}_- )  \Big] dx\\
&=\int_{\bbt^3} \big(1-\bar \psi^{\kappa,\nu}_{\delta,\eps} \big) \Big[ \Big(\theta_- (- s) ( U_- )+ \frac{ p_- + \mathcal{E}_- }{\rho_- }\Big) \rho -p_- - \mathcal{E}  \Big] dx.
\end{aligned}
\end{align*}
Likewise,
\[
J_2= \int_{\bbt^3}\bar \psi^{\kappa,\nu}_{\delta,\eps} \Big[ \Big(\theta_+ (- s) ( U_+ )+ \frac{ p_+ + \mathcal{E}_+ }{\rho_+ }\Big) \rho -p_+ - \mathcal{E}  \Big] dx.
\]
Set
\[
\beta_-:=\theta_- (- s) ( U_- )+ \frac{ p_- + \mathcal{E}_- }{\rho_- },\qquad \beta_+:=\theta_+ (- s) ( U_+ )+ \frac{ p_+ + \mathcal{E}_+}{\rho_+ }.
\]
Thus, using $\bar p=p_-=p_+$, we have
\begin{align}
\begin{aligned}\label{LMrel}
L(t) &= \int_{\bbt^3} \Psi  (x,t) (-\rho s)(U(x,t)) dx
- \int_{\bbt^3}  \Big[  \Big( \big(1-\bar \psi^{\kappa,\nu}_{\delta,\eps} \big) \beta_-  + \bar \psi^{\kappa,\nu}_{\delta,\eps}  \beta_+ \Big) \rho - \bar p  -\mathcal{E}  \Big](x,t) dx \\
&=:M(t),\qquad \forall t\ge 0.
\end{aligned}
\end{align}
Since $ \bar \psi^{\kappa,\nu}_{\delta,\eps}\in C([0,T];C^\infty(\bbt^3))$, the initial conditions \eqref{ini-den}, \eqref{ini-ene}, \eqref{comp-ent}  and \eqref{ini-econ} imply  
\begin{align*}
\begin{aligned}
M(0)&=\lim_{t\to 0+}M(t)\\
&=\int_{\bbt^3} \Psi  (x,0) (-\rho s)(U_0) dx
- \int_{\bbt^3}  \Big[  \Big( \big(1-\bar \psi^{\kappa,\nu}_{\delta,\eps}(x,0) \big) \beta_-  + \bar \psi^{\kappa,\nu}_{\delta,\eps}(x,0)  \beta_+ \Big) \rho_0 - \bar p  -\mathcal{E}_0  \Big] dx,
\end{aligned}
\end{align*}
which together with \eqref{LMrel} yields
\begin{align*}
\begin{aligned}
M(0)&=\int_{\bbt^3} \Psi  (x,0) (-\rho s)(U_0) dx
- \int_{\bbt^3}  \Big[  \Big( \big(1-\bar \psi^{\kappa,\nu}_{\delta,\eps} \big) \beta_-  + \bar \psi^{\kappa,\nu}_{\delta,\eps}  \beta_+ \Big) \rho_0 - \bar p  -\mathcal{E}_0  \Big] dx\\
&=\int_{\bbt^3} \Big[ \big(1-\bar \psi^{\kappa,\nu}_{\delta,\eps}(x,0) \big)  \theta_- (-\rho s) (U_0|U_-)  +\bar \psi^{\kappa,\nu}_{\delta,\eps} (x,0)  \theta_+ (-\rho s) (U_0|U_+)  \Big] dx\\
&=: L_0.
\end{aligned}
\end{align*}
Thus, we have
\begin{align*}
\begin{aligned}
L(t)-L_0 &= M(t)-M(0)\\
&=\int_{\bbt^3}   \Big[  \Psi  (x,t) (-\rho s)(U(x,t))  - \Psi  (0,t) (-\rho s)(U_0)   \Big] dx \\
&\quad
- \int_{\bbt^3}  \Big[ \rho\Big( \big(1-\bar \psi^{\kappa,\nu}_{\delta,\eps} \big) \beta_-  + \bar \psi^{\kappa,\nu}_{\delta,\eps}  \beta_+ \Big)(x,t)-\rho_0\Big( \big(1-\bar \psi^{\kappa,\nu}_{\delta,\eps} \big) \beta_-  + \bar \psi^{\kappa,\nu}_{\delta,\eps}  \beta_+ \Big)(x,0) \Big]  dx \\
&\quad +  \int_{\bbt^3}  (\mathcal{E}-\mathcal{E}_0)   dx=: R_1(t)+R_2(t)+R_3(t).
\end{aligned}
\end{align*}
First, by \eqref{energy-d}, $R_3(t)\le 0$.\\
To handle $R_2(t)$, we consider the following test functions for \eqref{mass-con}:
\[
\varphi(x,\tau) :=  \Big( \big(1-\bar \psi^{\kappa,\nu}_{\delta,\eps} \big) \beta_-  + \bar \psi^{\kappa,\nu}_{\delta,\eps}  \beta_+ \Big)(x,\tau) \varphi_{t, \zeta} (\tau),
\]
where  $\varphi_{t, \zeta}$ are as in \eqref{time-mol}. Plugging the above test functions into \eqref{mass-con}, and taking $\zeta\to 0$ together with the same argument as before, we have
\[
R_2(t)=(\beta_+ - \beta_-) \int_0^t \int_{\bbt^3}  \rho(x,\tau) \Big( \partial_\tau\bar \psi^{\kappa,\nu}_{\delta,\eps} + u\cdot\nabla \bar \psi^{\kappa,\nu}_{\delta,\eps} \Big) (x,\tau)  dxd\tau
\]
Therefore, using \eqref{went1}, we have
\begin{align*}
\begin{aligned}
L(t)&\le L_0  -\min\{\theta_+,\theta_-\} \int_0^t \int_{\bbt^3} \Big( \kappa\frac{ |\nabla \theta|^2}{\theta^2} + \nu |\nabla u|^2 \Big)(x,\tau)  dxd\tau\\
&\qquad + \kappa |\theta_+-\theta_-| \int_0^t \int_{\bbt^3} \frac{|\nabla\theta|}{\theta} |\nabla \bar \psi^{\kappa,\nu}_{\delta,\eps}|   dxd\tau \\
&\qquad +\int_0^t\int_{\bbt^3} \Big(C_1 \rho + C_2 (-\rho s)(U) \Big)\Big( \partial_\tau\bar \psi^{\kappa,\nu}_{\delta,\eps} + u\cdot\nabla \bar \psi^{\kappa,\nu}_{\delta,\eps} \Big) (x,\tau)  dxd\tau ,
\end{aligned}
\end{align*}
where $C_1:=\beta_+-\beta_-$, $C_2:=\theta_+-\theta_-$.\\
Since $\bar\psi^{\kappa,\nu}_{\delta,\eps}|_{t=0} = \psi_0*\eta_\eps$ by \eqref{exppsi}, it follows from \eqref{bdd-near} that
\begin{align*}
\begin{aligned}
L_0 &=\int_{\bbt^3} \Big[ \big(1-( \psi_0*\eta_\eps) \big) \theta_- (-\rho s) (U_0|U_-)  +( \psi_0*\eta_\eps) \theta_+ (-\rho s) (U_0|U_+)  \Big] dx \\
&\le  C\int_{\bbt^3} (-\rho s) (U_0|\bar U) dx + C\int_{1/2 -\eps}^{1/2 +\eps} (-\rho s) (U_0|\bar U) dx \\
&\le   C\int_{\bbt^3} (-\rho s) (U_0|\bar U) dx  + C\eps.
\end{aligned}
\end{align*}
\end{proof}

Note first that since the velocity $u$ depends on $\kappa,\nu$, $\bar \psi^{\kappa,\nu}_{\delta,\eps}$ depends on $\kappa, \delta, \eps,\nu$ by the definition \eqref{psi-eq}.
Thus, the left-hand side of the estimate \eqref{went-ineq} depends on $\kappa, \delta, \eps,\nu$.
Therefore, before performing the limit process for  \eqref{went-ineq}, we may first use Lemma \ref{lem:newf} to ensure that the left-hand side of \eqref{went-ineq} controls a new nonlinear functional independent of those parameters $\kappa, \delta, \eps,\nu$. Indeed, using Lemmas \ref{lem:newf} and \ref{lem:wr} together with \eqref{psi-be}, we have the following Proposition.

\begin{proposition}
Under the same hypotheses as in Lemma \ref{lem:newf}, there exist constants $C>0$, $C_1, C_2$ such that
 for a solution $U^{\kappa,\nu}:=(\rho, m, \mathcal{E})$  of \eqref{NSF},
\begin{align}
\begin{aligned}\label{went2}
&\int_{\bbt^3}\Big(\mathcal{F}(\rho^{\kappa,\nu}) + \mathcal{G}(\mathcal{E}^{\kappa,\nu}-\bar{\mathcal{E}}) +\frac{|m^{\kappa,\nu}|^2}{2\rho^{\kappa,\nu}} \Big)dx   +  \int_0^t \int_{\bbt^3} \Big( \kappa\frac{ |\nabla \theta^{\kappa,\nu}|^2}{(\theta^{\kappa,\nu})^2} + \nu |\nabla u^{\kappa,\nu}|^2 \Big)(x,\tau)  dxd\tau\\
&\quad \le C\eps + C\int_{\bbt^3} (-\rho s) (U_0|\bar U) dx + \kappa C \int_0^t \int_{\bbt^3} \frac{|\nabla\theta^{\kappa,\nu}|}{\theta^{\kappa,\nu}} |\nabla \bar \psi^{\kappa,\nu}_{\delta,\eps}|   dxd\tau \\
&\qquad+\int_0^t\int_{\bbt^3} \Big(C_1 \rho^{\kappa, \nu} + C_2 (-\rho s)(U^{\kappa, \nu}) \Big)\Big( \partial_\tau\bar \psi^{\kappa,\nu}_{\delta,\eps} + u^{\kappa,\nu}\cdot\nabla \bar \psi^{\kappa,\nu}_{\delta,\eps} \Big) (x,\tau)  dxd\tau .
\end{aligned}
\end{align}
\end{proposition}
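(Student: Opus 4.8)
The plan is to deduce \eqref{went2} directly from the weighted relative entropy inequality \eqref{went-ineq} of Lemma \ref{lem:wr}, by replacing the weighted relative entropy appearing on its left-hand side with the coercive nonlinear functional supplied by Lemma \ref{lem:newf}. The only genuinely new step is a pointwise lower bound for the first integrand of \eqref{went-ineq}; everything else is bookkeeping of positive constants, so I expect no real analytic obstacle here.

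First I would isolate the leading term of the left-hand side of \eqref{went-ineq},
\[
\big(1-\bar \psi^{\kappa,\nu}_{\delta,\eps} \big) \theta_- (-\rho s) (U|U_-)  +\bar \psi^{\kappa,\nu}_{\delta,\eps}  \theta_+ (-\rho s) (U|U_+).
\]
By \eqref{psi-be} the coefficients $1-\bar \psi^{\kappa,\nu}_{\delta,\eps}$ and $\bar \psi^{\kappa,\nu}_{\delta,\eps}$ are nonnegative and sum to one, so this is a genuine convex combination of the two weighted relative entropies. Lemma \ref{lem:newf} bounds each of them from below by $\frac1C\big(\mathcal{F}(\rho)+\mathcal{G}(\mathcal{E}-\bar{\mathcal{E}})+\tfrac{|m|^2}{2\rho}\big)$, since that functional is dominated by the \emph{minimum} of $\theta_-(-\rho s)(U|U_-)$ and $\theta_+(-\rho s)(U|U_+)$ with a single constant $C$. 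Hence the convex combination is itself bounded below by the same quantity, and integrating over $\bbt^3$ gives
\[
\frac1C\int_{\bbt^3}\Big(\mathcal{F}(\rho)+\mathcal{G}(\mathcal{E}-\bar{\mathcal{E}})+\tfrac{|m|^2}{2\rho}\Big)\,dx \le \int_{\bbt^3}\Big[\big(1-\bar \psi^{\kappa,\nu}_{\delta,\eps}\big)\theta_-(-\rho s)(U|U_-)+\bar \psi^{\kappa,\nu}_{\delta,\eps}\theta_+(-\rho s)(U|U_+)\Big]\,dx.
\]

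Next I would reconcile the dissipation coefficients: in \eqref{went-ineq} the dissipation carries the positive factor $\min\{\theta_+,\theta_-\}$, whereas \eqref{went2} records it with factor one. Since $\min\{\theta_+,\theta_-\}>0$ is fixed, I set $K:=\max\{C,\ 1/\min\{\theta_+,\theta_-\}\}$ and multiply \eqref{went-ineq} (after inserting the lower bound above) by $K$, so that the coercive functional appears with coefficient $K/C\ge1$ and the dissipation with coefficient $K\min\{\theta_+,\theta_-\}\ge1$. Because both of these left-hand contributions are nonnegative, I may drop the coefficients down to exactly one, which yields
\[
\int_{\bbt^3}\Big(\mathcal{F}(\rho)+\mathcal{G}(\mathcal{E}-\bar{\mathcal{E}})+\tfrac{|m|^2}{2\rho}\Big)\,dx+\int_0^t\int_{\bbt^3}\Big(\kappa\tfrac{|\nabla\theta|^2}{\theta^2}+\nu|\nabla u|^2\Big)\,dxd\tau \le K\cdot\big(\text{RHS of }\eqref{went-ineq}\big).
\]

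Finally, the factor $K$ is folded into the constants $C,C_1,C_2$ on the right-hand side of \eqref{went-ineq}: the terms $C\eps$, $C\int_{\bbt^3}(-\rho s)(U_0|\bar U)\,dx$, the $\kappa$-weighted term $\kappa C\int_0^t\int_{\bbt^3}\tfrac{|\nabla\theta|}{\theta}|\nabla\bar \psi^{\kappa,\nu}_{\delta,\eps}|\,dxd\tau$, and the transport term with coefficients $C_1,C_2$ carry over verbatim after relabeling, producing exactly the right-hand side of \eqref{went2}. Applying this with $U=U^{\kappa,\nu}$ gives the claimed inequality. The only point demanding care is that the coercivity constant from Lemma \ref{lem:newf} and the factor $\min\{\theta_+,\theta_-\}$ be merged into one final constant without altering the structure of the three error terms, since those must survive the successive limits in $\kappa,\delta,\eps,\nu$ carried out afterward.
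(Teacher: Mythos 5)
Your proposal is correct and is exactly the paper's intended argument: the paper gives no detailed proof, stating only that the Proposition follows from Lemmas \ref{lem:newf} and \ref{lem:wr} together with \eqref{psi-be}, which is precisely the convex-combination lower bound you spell out. Your extra bookkeeping step (multiplying by $K=\max\{C,\,1/\min\{\theta_+,\theta_-\}\}$ and absorbing it into $C,C_1,C_2$) correctly accounts for the change of the dissipation coefficient from $\min\{\theta_+,\theta_-\}$ to $1$, which the paper leaves implicit.
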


The remaining part of the proof is dedicated to the asymptotic analysis on passing to the limits for the parameters $\kappa, \delta, \eps$ and $\nu$ in order.

\subsection{Passing to the limit as $\kappa\to 0$} \label{subsec:k}

First of all, since Young's inequality yields
\[
 \kappa C \int_0^t \int_{\bbt^3} \frac{|\nabla\theta^{\kappa,\nu}|}{\theta^{\kappa,\nu}} |\nabla \bar \psi^{\kappa,\nu}_{\delta,\eps}|   dxd\tau
 \le \frac{\kappa}{2}  \int_0^t \int_{\bbt^3}  \frac{|\nabla\theta^{\kappa,\nu}|^2}{(\theta^{\kappa,\nu})^2}  dxd\tau + \kappa C \int_0^t \int_{\bbt^3}  |\nabla \bar \psi^{\kappa,\nu}_{\delta,\eps}|^2  dxd\tau,
\]
it follows from \eqref{went2} that
\begin{align}
\begin{aligned}\label{dwent}
&\int_{\bbt^3}\Big(\mathcal{F}(\rho^{\kappa, \nu}) + \mathcal{G}(\mathcal{E}^{\kappa, \nu}-\bar {\mathcal{E}}) +\frac{|m^{\kappa,\nu}|^2}{2\rho^{\kappa,\nu}}  \Big)dx  \\
&\quad \le C\eps + C\int_{\bbt^3} (-\rho s) (U_0|\bar U) dx + J_1 + J_2 .
\end{aligned}
\end{align}
where
\begin{align*}
\begin{aligned}
&J_1:=  \kappa C \int_0^t \int_{\bbt^3}  |\nabla \bar \psi^{\kappa, \nu}_{\delta,\eps}|^2  dxd\tau ,\\
&J_2:=\int_0^t\int_{\bbt^3} \Big(C_1 \rho^{\kappa, \nu} + C_2 (-\rho s)(U^{\kappa, \nu}) \Big)\Big( \partial_\tau\bar \psi^{\kappa,\nu}_{\delta,\eps} + u^{\kappa,\nu}\cdot\nabla \bar \psi^{\kappa,\nu}_{\delta,\eps} \Big) dxd\tau .
\end{aligned}
\end{align*}
For $J_1$, using $\nabla \bar\psi^{\kappa,\nu}_{\delta,\eps} = \psi^{\kappa, \nu}_{\delta} * \nabla\eta_\eps$ by \eqref{psi-mol}, and $0\le  \psi^{\kappa, \nu}_{\delta}\le 1$ by \eqref{psi-bdd}, we have
\[
 \int_0^t \int_{\bbt^3}  |\nabla \bar\psi^{\kappa, \nu}_{\delta,\eps}|^2  dxd\tau \le C(\eps) \quad\mbox{independent of $\kappa$},
\]
from which,
\[
J_1\to 0 \quad \mbox{as } \kappa\to 0.
\]

For $J_2$, we will use the following uniform bound :
\beq\label{uni-vk}
\|u^{\kappa, \nu}\|_{L^2(0,T;H^1(\bbt^3))} \le C \quad\mbox{(independent of $\kappa,\delta,\eps$)}.
\eeq
This follows from  \eqref{pri-v} and 
\[
\|u^{\kappa, \nu} \|_{L^2(\bbt^3)}^2 \le C \Big[\|\nabla u^{\kappa, \nu} \|_{L^2(\bbt^3)} + \|\rho^{\kappa, \nu} \|_{L^1(\bbt^3)} \|\sqrt{\rho^{\kappa, \nu}} u^{\kappa, \nu}  \|_{L^2(\bbt^3)}^2 \Big] \le C,
\]
which is obtained by Lemma \ref{lem:fei} together with \eqref{pri-v}, \eqref{energy-d}, \eqref{pri-mass}.

\begin{lemma}\label{lem:fei} \cite[Lemma 3.2]{Feireisl_b}
Let $v\in W^{1,2}(\Omega)$ for a bounded domain $\Omega\subset\bbr^N$, and $\rho$ be a non-negative function such that
\[
0<M\le \int_\Omega \rho \,dx,\quad \int_\Omega \rho^\gamma dx \le E_0,
\] 
where $M, E_0$ and $\gamma>1$ are some constants.
Then there exists a constant $C=C(M,E_0)$ such that
\[
\|v\|_{L^2(\Omega)}^2 \le C\bigg( \|\nabla_x v\|_{L^2(\Omega)}^2 +\Big(\int_\Omega \rho |v| dx \Big) \bigg).
\]
\end{lemma}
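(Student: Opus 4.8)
The plan is to argue by contradiction using a compactness (Rellich--Kondrachov) argument, which is the standard route for generalized Poincar\'e inequalities of Feireisl type. Suppose the claimed inequality fails for every constant. Then for each $n\in\bbn$ I can select $v_n\in W^{1,2}(\Omega)$ and a nonnegative $\rho_n$ obeying $\int_\Omega \rho_n\,dx\ge M$ and $\int_\Omega \rho_n^\gamma\,dx\le E_0$ such that
\[
\|v_n\|_{L^2(\Omega)}^2 > n\Big(\|\nabla v_n\|_{L^2(\Omega)}^2 + \int_\Omega \rho_n|v_n|\,dx\Big).
\]
Normalizing so that $\|v_n\|_{L^2(\Omega)}=1$, this forces both $\|\nabla v_n\|_{L^2(\Omega)}^2\to0$ and $\int_\Omega \rho_n|v_n|\,dx\to0$; in particular $(v_n)$ is bounded in $W^{1,2}(\Omega)$.

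By Rellich--Kondrachov, up to a subsequence $v_n\to v$ strongly in $L^2(\Omega)$ and weakly in $W^{1,2}(\Omega)$, so $\|v\|_{L^2(\Omega)}=1$, while $\|\nabla v_n\|_{L^2}\to0$ gives $\nabla v=0$; hence on the connected domain $\Omega$ the limit $v\equiv c$ is a \emph{nonzero} constant. Simultaneously, the bound $\int_\Omega\rho_n^\gamma\,dx\le E_0$ keeps $(\rho_n)$ bounded in $L^\gamma(\Omega)$, so along a further subsequence $\rho_n\weakto\rho$ in $L^\gamma(\Omega)$; testing against the constant $1\in L^{\gamma'}(\Omega)$ preserves the lower mass bound, $\int_\Omega\rho\,dx\ge M$.

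The crux is to show that $\int_\Omega\rho_n|v_n|\,dx$ cannot tend to zero, contradicting the display above. I would set $E_n:=\{x\in\Omega:\ |v_n(x)|<|c|/2\}$. Since $\big|\,|v_n|-|c|\,\big|\le|v_n-c|$ and $v_n\to c$ in $L^2$, the bound $(|c|/2)^2|E_n|\le\|v_n-c\|_{L^2(\Omega)}^2\to0$ shows $|E_n|\to0$. On the complement $|v_n|\ge|c|/2$, so
\[
\int_\Omega\rho_n|v_n|\,dx \ge \frac{|c|}{2}\int_{\Omega\setminus E_n}\rho_n\,dx = \frac{|c|}{2}\Big(\int_\Omega\rho_n\,dx-\int_{E_n}\rho_n\,dx\Big).
\]
The concentration term is killed by H\"older together with the uniform $L^\gamma$ bound, $\int_{E_n}\rho_n\,dx\le E_0^{1/\gamma}|E_n|^{1/\gamma'}\to0$ (this is exactly where $\gamma>1$, i.e. $\gamma'<\infty$, is used). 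Hence $\liminf_n\int_\Omega\rho_n|v_n|\,dx\ge \tfrac{|c|}{2}M>0$, the desired contradiction; unwinding the argument yields a constant $C$ depending only on $M$, $E_0$ (and $\Omega$).

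I expect the main obstacle to be the passage to the limit in the weighted term $\int_\Omega\rho_n|v_n|\,dx$: one has only \emph{weak} convergence of $\rho_n$ in $L^\gamma$ together with strong $L^2$ convergence of $v_n$, and the two factors need not lie in dual spaces, so a naive weak--strong pairing is unavailable. The ``small set'' device above circumvents this precisely by using the $L^\gamma$ bound to prevent mass concentration of $\rho_n$ on the vanishing set where $|v_n|$ is small. A secondary delicate point is that the two right-hand terms scale differently in $v$ (the gradient term quadratically, the weighted term linearly); it is the normalization $\|v_n\|_{L^2}=1$ that reconciles this within the compactness scheme, and one must be careful that the contradiction is extracted at unit norm rather than after an inhomogeneous rescaling.
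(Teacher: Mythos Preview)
The paper does not prove this lemma; it is quoted from Feireisl's monograph and invoked as a black box to derive the uniform $H^1$ bound \eqref{uni-vk} on the velocity. Your compactness-and-contradiction argument is the standard (and correct) proof of this generalized Poincar\'e inequality; the only hypotheses you invoke beyond the literal statement are connectedness of $\Omega$ and enough boundary regularity for Rellich--Kondrachov to apply, both of which are routine for a ``bounded domain'' and trivially satisfied in the paper's application to $\bbt^3$.
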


By \eqref{uni-vk}, there exists $u^\nu\in L^2(0,T;H^1(\bbt^3))$ such that
\beq\label{wcu}
u^{\kappa, \nu}\weakto u^\nu \quad \mbox{weakly}~\mbox{in }~ L^2(0,T;H^1(\bbt^3)) \quad \mbox{as } \kappa\to 0 .
\eeq
Note that it follows from the uniform bound \eqref{psi-bdd}, there exists $\psi^{\nu}_{\delta}$  such that 
\beq\label{psinud}
0\le \psi^{\nu}_\delta \le 1
\eeq
and
\beq\label{psi-d}
\psi^{\kappa, \nu}_{\delta} \weakto \psi^{\nu}_\delta \quad \mbox{weakly-*} ~\mbox{in }~ L^\infty((0,T)\times\bbt^3) \quad \mbox{as } \kappa\to 0.
\eeq
To get the desired limit from $J_2$, we need to rewrite  $\partial_\tau\bar \psi^{\kappa,\nu}_{\delta,\eps} + u^{\kappa,\nu}\cdot\nabla \bar \psi^{\kappa,\nu}_{\delta,\eps}$ as follows:
First, take the mollification on the equation \eqref{psi-eq} with the mollifier $\eta_\eps$, to have
\beq\label{mopsi}
 \partial_t \bar\psi^{\kappa, \nu}_{\delta,\eps} + (\overline{\bar u_\delta^{\kappa, \nu}\cdot\nabla \psi_\delta^{\kappa, \nu}})_\eps = 0,
\eeq
from which, we have
\[
 \partial_t \bar\psi^{\kappa, \nu}_{\delta,\eps} + u^{\kappa, \nu} \cdot\nabla \bar\psi^{\kappa, \nu}_{\delta,\eps} = R^{\kappa, \nu}_{\delta,\eps} ,
\]
where
\[
R^{\kappa, \nu}_{\delta,\eps} := u^{\kappa, \nu} \cdot\nabla \bar\psi^{\kappa, \nu}_{\delta,\eps} 
-  (\overline{\bar u_\delta^{\kappa, \nu}\cdot\nabla \psi_\delta^{\kappa, \nu}})_\eps.
\]
equivalently,
\[
R^{\kappa, \nu}_{\delta,\eps} :=u^{\kappa, \nu} \cdot\nabla \bar\psi^{\kappa, \nu}_{\delta,\eps} 
 - \big(\overline{\div( \bar u^{\kappa, \nu}_\delta \psi^{\kappa, \nu}_{\delta} )}\big)_\eps + \big(\overline{(\div \bar u^{\kappa, \nu}_\delta) \psi^{\kappa, \nu}_{\delta}}\big)_\eps.
\]
By using the above representation, we rewrite $J_2$  as
\[
J_2:=\int_0^t\int_{\bbt^3} \Big(C_1 \rho^{\kappa, \nu} + C_2 (-\rho s)(U^{\kappa, \nu}) \Big) R^{\kappa, \nu}_{\delta,\eps} dxd\tau,
\]

We will first show that 
there exists $G^\nu \in L^\infty(0,T;L^2(\bbt^3))$ such that
\[
C_1 \rho^{\kappa, \nu} + C_2 (-\rho s)(U^{\kappa, \nu})  \to G^\nu\quad\mbox{strongly}~\mbox{in }~ L^2(0,T;H^{-1}(\bbt^3)) \quad \mbox{as } \kappa\to 0.
\]
For that, we may use the Aubin-Lions lemma (see \cite[Lemma 6.3]{Feireisl_b}):
\begin{lemma}\label{lem:lions}
Let $\{v_n\}_{n=1}^\infty$ be a sequence of functions such that $v_n$ are uniformly bounded in $L^2(0,T; L^q(\Omega))\cap L^\infty(0,T; L^1(\Omega))$ with $q>\frac{2N}{N+1}$. Furthermore, assume that
 $$
 \partial_t v_n\geq g_n,\qquad {\rm in }\quad \mathcal{D}^\prime((0,T)\times\Omega)
 $$
where $g_n$ are uniformly bounded in $L^1(0,T; W^{-m,r}(\Omega))$ with $m\geq1$ and $r>1$. Then $\{v_n\}_{n=1}^\infty$ contains a subsequence such that
$$
v_n\rightarrow v\qquad {\rm in }\quad L^2(0,T; H^{-1}(\Omega))\quad {\rm strongly}.
$$
\end{lemma}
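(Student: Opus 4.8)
The plan is to prove this one-sided Aubin--Lions lemma by pairing a compact \emph{spatial} embedding with a \emph{temporal} compactness argument that extracts the monotonicity hidden in the one-sided bound $\partial_t v_n \ge g_n$. The spatial ingredient is that, on the bounded domain $\Omega\subset\bbr^N$, the embedding $L^q(\Omega)\hookrightarrow H^{-1}(\Omega)$ is compact whenever $q>\frac{2N}{N+2}$ (by duality with the compact Sobolev embedding $H^1_0(\Omega)\hookrightarrow\hookrightarrow L^{q'}(\Omega)$), so the hypothesis $q>\frac{2N}{N+1}$ is more than sufficient. I would fix a sufficiently negative-order Sobolev space $Z$ into which $L^1(\Omega)$, $H^{-1}(\Omega)$ and $W^{-m,r}(\Omega)$ all embed continuously; then $L^q(\Omega)\hookrightarrow\hookrightarrow H^{-1}(\Omega)\hookrightarrow Z$ is compact, and Ehrling's interpolation inequality provides, for every $\eta>0$, a constant $C_\eta$ with
\[
\|w\|_{H^{-1}(\Omega)}\le \eta\,\|w\|_{L^q(\Omega)}+C_\eta\,\|w\|_{Z},\qquad w\in L^q(\Omega).
\]

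First I would isolate the monotone part of $v_n$. Set $G_n(t):=\int_0^t g_n(s)\,ds$; the uniform $L^1(0,T;W^{-m,r})$ bound on $g_n$ makes $\{G_n\}$ bounded in $W^{1,1}(0,T;W^{-m,r})$, hence equicontinuous and precompact in $C([0,T];Z)$, so along a subsequence $G_n\to G$ there. The remainder $w_n:=v_n-G_n$ then obeys $\partial_t w_n\ge 0$ in $\mathcal D'((0,T)\times\Omega)$, which says precisely that $t\mapsto\langle w_n(t),\varphi\rangle$ is non-decreasing for each fixed $\varphi\in\mathcal D(\Omega)$ with $\varphi\ge 0$. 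As these scalar functions are also uniformly bounded by the $L^\infty(0,T;L^1)$ bound, Helly's selection theorem gives pointwise-a.e.-in-$t$ convergence along a subsequence, first for each $\varphi$ in a countable dense subset of $\mathcal D(\Omega)$ and then, by a diagonal extraction, $w_n(t)\to w(t)$ in $\mathcal D'(\Omega)$ for a.e.\ $t$.

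This pointwise-in-time information is upgraded to strong convergence in $Z$: for a.e.\ $t$ the uniform $L^q(\Omega)$ bound forces $w_n(t)\rightharpoonup w(t)$ weakly in $L^q(\Omega)$, and the compact embedding $L^q(\Omega)\hookrightarrow\hookrightarrow Z$ turns this into $w_n(t)\to w(t)$ strongly in $Z$. Adding back $G_n\to G$ yields $v_n(t)\to v(t):=w(t)+G(t)$ in $Z$ for a.e.\ $t$; since $\|v_n(t)\|_Z\lesssim\|v_n(t)\|_{L^1(\Omega)}\le C$ uniformly in $t$ and $n$, dominated convergence on the finite interval promotes this to $v_n\to v$ strongly in $L^2(0,T;Z)$. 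Feeding the subsequence into the Ehrling inequality gives
\[
\|v_n-v_m\|_{L^2(0,T;H^{-1})}\le \eta\,\|v_n-v_m\|_{L^2(0,T;L^q)}+C_\eta\,\|v_n-v_m\|_{L^2(0,T;Z)},
\]
where the first term is at most $C\eta$ by the uniform $L^2(0,T;L^q)$ bound and the second tends to $0$; letting $\eta\to 0$ shows $\{v_n\}$ is Cauchy, hence convergent, in $L^2(0,T;H^{-1}(\Omega))$.

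I expect the one-sided nature of the derivative bound to be the principal obstacle. The classical Aubin--Lions--Simon theorem demands a genuine two-sided bound on $\partial_t v_n$ in some Bochner space, which is simply not available here. The remedy is the decomposition $v_n=w_n+G_n$ into a part of bounded variation in time and a part with nonnegative distributional time derivative, so that the scalar monotonicity of $t\mapsto\langle w_n(t),\varphi\rangle$ can be substituted for the missing equicontinuity and passed through Helly's theorem. Secondary care is required in choosing the single weak space $Z$ so that all the needed embeddings hold at once, and in securing the uniform bound (via $L^\infty(0,T;L^1)$) that legitimizes the dominated-convergence step.
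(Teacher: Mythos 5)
The paper offers no proof of this lemma at all: it is quoted verbatim from Feireisl's book \cite{Feireisl_b} (Lemma 6.3 there), so your proposal can only be compared with the standard argument in that reference. Your overall skeleton --- split off $G_n(t)=\int_0^t g_n(s)\,ds$ so that $w_n=v_n-G_n$ has nonnegative distributional time derivative, run Helly's selection theorem on the monotone scalar functions $t\mapsto\langle w_n(t),\varphi\rangle$, and close with an Ehrling interpolation between the $L^2(0,T;L^q)$ bound and a very weak space $Z$ --- is exactly the right (and the standard) architecture. However, two of your intermediate claims are false as written, and both occur at the step where pointwise-in-time compactness is produced.

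First, boundedness of $G_n$ in $W^{1,1}(0,T;W^{-m,r}(\Omega))$ does \emph{not} give equicontinuity, hence no precompactness in $C([0,T];Z)$: an $L^1$-in-time bound on $\partial_t G_n$ only bounds the variation. Take $g_n(t)=n\,\mathbf{1}_{[1/2,\,1/2+1/n]}(t)\,\xi$ for a fixed $\xi\in W^{-m,r}(\Omega)$; then $\|g_n\|_{L^1(0,T;W^{-m,r})}$ is constant in $n$, yet $G_n$ converges pointwise to the discontinuous step $\mathbf{1}_{[1/2,T]}(t)\,\xi$, so no subsequence can converge uniformly. Second, the assertion that ``for a.e.\ $t$ the uniform $L^q(\Omega)$ bound forces $w_n(t)\rightharpoonup w(t)$ weakly in $L^q$'' has no basis: the $L^q$ control is only square-integrable in time, so $\sup_n\|v_n(t)\|_{L^q}$ can be infinite at every fixed $t$ (the norms may blow up on time sets of shrinking measure). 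The only fixed-time bounds available are $\|v_n(t)\|_{L^1(\Omega)}\le C$ for a.e.\ $t$ and $\|G_n(t)\|_{W^{-m,r}}\le C$ for all $t$. Both defects are repaired by one device, which is what the cited proof effectively uses: choose $Z=W^{-k,s}(\Omega)$ with $k$ large and $s$ close to $1$ so that $W^{k,s'}_0(\Omega)$ embeds compactly into both $C(\overline\Omega)$ and $W^{m,r'}_0(\Omega)$; by Schauder duality, bounded sets of $\mathcal{M}(\Omega)\supset L^1(\Omega)$ and of $W^{-m,r}(\Omega)$ are then \emph{precompact} in $Z$. With this $Z$, a vector-valued Helly selection (uniformly bounded variation plus values in a $Z$-compact set) gives $G_n(t)\to G(t)$ in $Z$ for every $t$ --- which is all your later steps actually need, not $C([0,T];Z)$ convergence --- and your a.e.-in-$t$ distributional limits $w_n(t)\to w(t)$ upgrade to strong convergence in $Z$ because $w_n(t)$ stays in a fixed $Z$-compact set, with no appeal to a fixed-time $L^q$ bound. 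After these corrections, your dominated-convergence step and the Ehrling inequality (which uses the $L^2(0,T;L^q)$ bound only in time-integrated form, $q>\frac{2N}{N+2}$ sufficing for compactness of $L^q(\Omega)\hookrightarrow H^{-1}(\Omega)$) close the proof exactly as you wrote it.
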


In order to apply Lemma \ref{lem:lions} to the entropy inequality \eqref{ent-ineq}:
$$
(\rho^{\kappa,\nu} s^{\kappa,\nu})_t+{\rm div} (\rho^{\kappa,\nu} u^{\kappa,\nu}s^{\kappa,\nu})-\kappa{\rm div} \big(\frac{\nabla \theta^{\kappa,\nu}}{
\theta^{\kappa,\nu}}\big) +\kappa\frac{|\nabla\theta^{\kappa,\nu}|^2}{(\theta^{\kappa,\nu})^2}+\nu\frac{\mathbb{S}:\nabla u^{\kappa,\nu}}{\theta^{\kappa,\nu}}\geq0 \quad{\rm in} \quad \mathcal{D}^\prime((0,T)\times\bbt^3),
$$
we set
\[
(\rho^{\kappa,\nu} s^{\kappa,\nu})_t \ge -{\rm div} (\rho^{\kappa,\nu} u^{\kappa,\nu}s^{\kappa,\nu})+\kappa{\rm div} \big(\frac{\nabla \theta^{\kappa,\nu}}{
\theta^{\kappa,\nu}}\big) -\kappa\frac{|\nabla\theta^{\kappa,\nu}|^2}{(\theta^{\kappa,\nu})^2}-\nu\frac{\mathbb{S}:\nabla u^{\kappa,\nu}}{\theta^{\kappa,\nu}}:=\sum_{i=1}^4g_i^\kappa.
\]
Since it follows from \eqref{pri-ent} and \eqref{uni-vk} that $\rho^{\kappa,\nu} s^{\kappa,\nu}$ and $u^{\kappa,\nu}$ are respectively uniformly bounded in $L^2(0,T; L^2(\bbt^3))$  and  $L^2(0,T; L^6(\bbt^3))$, $g_1^\kappa$ is uniformly bounded in $L^2(0,T; W^{-1,\frac32}(\bbt^3))$.
Since $\sqrt{\kappa}\frac{\nabla \theta^{\kappa,\nu}}{
\theta^{\kappa,\nu}}$ is uniformly bounded in $L^2(0,T; L^2(\bbt^3))$ by \eqref{pri-v}, $g_2^\kappa$ is uniformly bounded in $L^2(0,T; W^{-1,2}(\bbt^3))$. Moreover, $g_3^\kappa+g^{\kappa}_4$ is uniformly bounded in $L^1(0,T; L^1(\bbt^3))\hookrightarrow L^1(0,T; W^{-1,\frac32}(\bbt^3))$.
Therefore, by the Aubin-Lions lemma, $\{\rho^{\kappa,\nu} s^{\kappa,\nu}\}_{\kappa>0}$ is pre-compact in $L^2(0,T; W^{-1,2}(\bbt^3))$.
Thus, there exists $G^\nu_*$ such that
\[
(-\rho s)(U^{\kappa, \nu}) \to G^\nu_* \quad\mbox{strongly}~\mbox{in }~ L^2(0,T;H^{-1}(\bbt^3)) \quad \mbox{as } \kappa\to 0,
\]
in addition, by \eqref{pri-ent},
\[
G^\nu_* \in L^\infty(0,T;L^2(\bbt^3)).
\]

Likewise, applying the Aubin-Lions lemma to the continuity equation:
\[
\partial_t \rho^{\kappa,\nu} + \div (\rho^{\kappa,\nu} u^{\kappa,\nu}) =0 ,\quad \mbox{in }~ \mathcal{D}^\prime((0,T)\times\bbt^3),
\]
and using \eqref{pri-mass} and \eqref{uni-vk}, we obtain that there exists $\rho^\nu \in L^\infty(0,T;L^2(\bbt^3))$ such that
\beq\label{rho-strong}
\rho^{\kappa, \nu} \to \rho^\nu\quad\mbox{strongly}~\mbox{in }~ L^2(0,T;H^{-1}(\bbt^3)) \quad \mbox{as } \kappa\to 0.
\eeq
Thus, putting $G^\nu:= C_1 \rho^\nu + C_2 G^\nu_*$, we have
\beq\label{Gb}
\|G^\nu\|_{L^\infty(0,T;L^2(\bbt^3))} \le C, \quad\mbox{ (independent of $\eps, \delta$) }
\eeq
and
\beq\label{strongg}
C_1 \rho^{\kappa, \nu} + C_2 (-\rho s)(U^{\kappa, \nu})  \to G^\nu\quad\mbox{strongly}~\mbox{in }~ L^2(0,T;H^{-1}(\bbt^3))  \quad \mbox{as } \kappa\to 0.
\eeq

Next, we will show that
\beq\label{claim-Rk}
R^{\kappa, \nu}_{\delta,\eps} \weakto R^{\nu}_{\delta,\eps} \quad\mbox{weakly} ~\mbox{in }~ L^2(0,T; H^1(\bbt^3)) \quad \mbox{as } \kappa\to 0,
\eeq
where
\beq\label{RNE}
R^{\nu}_{\delta,\eps} :=  u^{\nu} \cdot\nabla \bar\psi^{\nu}_{\delta,\eps} 
- \big(\overline{\div(  \bar u^{\nu}_\delta \psi^{\nu}_{\delta} )}\big)_\eps + \big(\overline{(\div \bar u^{\nu}_\delta) \psi^{\nu}_{\delta}}\big)_\eps.
\eeq
For that, we may derive a strong convergence of $\psi^{\kappa, \nu}_{\delta}$. 
Since the equation \eqref{psi-eq} can be rewritten as
\[
\partial_t  \psi^{\kappa, \nu}_{\delta} = - \div( \bar u^{\kappa, \nu}_\delta \psi^{\kappa, \nu}_{\delta} ) + (\div \bar u^{\kappa, \nu}_\delta) \psi^{\kappa, \nu}_{\delta},
\]
the uniform bounds \eqref{psi-bdd} and \eqref{uni-vk} imply that $\{\partial_t  \psi^{\kappa, \nu}_{\delta} \}_{\kappa>0}$ is bounded in $L^2(0,T;H^{-1}(\bbt^3))$. Then, this together with the bound \eqref{psi-bdd} and the Aubin-Lions Lemma implies 
\beq\label{st-psi}
\psi^{\kappa, \nu}_{\delta} \to \psi^{\nu}_{\delta}\quad\mbox{in }~ L^2(0,T;H^{-1}(\bbt^3)) \quad \mbox{as } \kappa\to 0 .
\eeq
Moreover, note that \eqref{uni-vk} with the mollification implies
\beq\label{highu}
\|\bar u^{\kappa, \nu}_\delta\|_{L^2(0,T;H^{2}(\bbt^3))} \le C \quad\mbox{ (independent of $\kappa$) } .
\eeq
Then, using the following Lemma \ref{lem:gencon} together with \eqref{uni-vk}, \eqref{psi-bdd},\eqref{st-psi} and \eqref{highu}, we have
\begin{align*}
\begin{aligned}
&\big(\overline{\div( \bar u^{\kappa, \nu}_\delta \psi^{\kappa, \nu}_{\delta} )}\big)_\eps  \weakto \big(\overline{\div( \bar u^{\nu}_\delta \psi^{\nu}_{\delta} )}\big)_\eps 
 \quad\mbox{weakly}\quad\mbox{in }~  L^2(0,T;H^1(\bbt^3)) ,\\
 &\big(\overline{(\div \bar u^{\kappa, \nu}_\delta) \psi^{\kappa, \nu}_{\delta}}\big)_\eps \weakto   \big(\overline{(\div \bar u^{\nu}_\delta) \psi^{\nu}_{\delta}}\big)_\eps 
 \quad\mbox{weakly}\quad\mbox{in }~  L^2(0,T;H^1(\bbt^3)) .
\end{aligned}
\end{align*}
On the other hand, since \eqref{psi-bdd} and the mollification yield
\[
\| \nabla \bar\psi^{\kappa, \nu}_{\delta,\eps} \|_{L^\infty(0,T;L^\infty(\bbt^3))} \le C \quad\mbox{ (independent of $\kappa$) }, 
\]
which together with \eqref{uni-vk} implies
\beq\label{uni-udp}
\| u^{\kappa,\nu} \cdot  \nabla \bar\psi^{\kappa, \nu}_{\delta,\eps} \|_{L^2(0,T;H^1(\bbt^3))} \le C \quad\mbox{ (independent of $\kappa$) } .
\eeq
Moreover, since \eqref{st-psi} with the mollification implies
\[
 \nabla \bar\psi^{\kappa, \nu}_{\delta,\eps} \to\nabla \bar\psi^{\nu}_{\delta,\eps} \quad\mbox{in }~ L^2(0,T;H^1(\bbt^3)) \quad \mbox{as } \kappa\to 0 ,
\]
this together with \eqref{wcu} and \eqref{uni-udp} implies
\[
 u^{\kappa,\nu} \cdot  \nabla \bar\psi^{\kappa, \nu}_{\delta,\eps} \weakto  u^{\nu} \cdot  \nabla \bar\psi^{\nu}_{\delta,\eps}   \quad\mbox{weakly}\quad\mbox{in }~ L^2(0,T;H^1(\bbt^3)) \quad \mbox{as } \kappa\to 0 .
\]
Therefore, we obtain the desired convergence \eqref{claim-Rk}.\\

\begin{lemma}\label{lem:gencon}
Assume that $\{u_n\}_{n\in\bbn}$ and $\{\psi_n\}_{n\in\bbn}$ are sequences such that
one of the following holds:
\begin{align*}
\begin{aligned} 
&{\rm (i) } \quad u_n \to u  \quad\mbox{in }~  L^2(0,T;H^1(\bbt^3))
 \quad{and}\quad \psi_n\weakto\psi   \quad\mbox{weakly}-*~\quad\mbox{in }~ L^\infty((0,T)\times\bbt^3)
; \\
& {\rm (ii) }\quad \psi_n \to \psi  \quad\mbox{in }~  L^2(0,T;H^{-1}(\bbt^3)) \quad{and}\\
&\qquad \|\psi_n\|_{ L^\infty((0,T)\times\bbt^3)} + \|u_n\|_{ L^2(0,T;H^1(\bbt^3))} +  \|\nabla{\rm div} u_n\|_{ L^2(0,T;L^2(\bbt^3))} \le C \, (\mbox{independent of } \, n).
\end{aligned}
\end{align*}
Then, for any fixed $\eps>0$, up to a subsequence, 
\begin{align*}
\begin{aligned}
&\big(\overline{\div( u_n  \psi_n )}\big)_\eps \weakto \big(\overline{\div( u  \psi )}\big)_\eps 
 \quad\mbox{weakly}\quad\mbox{in }~  L^2(0,T;H^1(\bbt^3)) \quad \mbox{as } n\to 0,\\
 &\big(\overline{(\div u_n) \psi_n }\big)_\eps \weakto   \big(\overline{(\div u) \psi }\big)_\eps 
 \quad\mbox{weakly}\quad\mbox{in }~  L^2(0,T;H^1(\bbt^3))  \quad \mbox{as } n\to 0.
\end{aligned}
\end{align*}
\end{lemma}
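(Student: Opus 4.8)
The plan is to prove both assertions by a single three-stage scheme. First I would identify the distributional limits of the un-mollified products $u_n\psi_n$ and $(\div u_n)\psi_n$; then upgrade these to weak convergence in $L^2(0,T;L^2(\bbt^3))$ via a uniform bound; and finally transfer the convergence through the fixed spatial mollifier $\eta_\eps$, which gains derivatives and therefore lands the result in $L^2(0,T;H^1(\bbt^3))$. The key structural observation, to be recorded first, is that for fixed $\eps>0$ convolution with $\eta_\eps$ followed by at most two spatial derivatives (using $\partial^\alpha(g*\eta_\eps)=g*\partial^\alpha\eta_\eps$ and Young's inequality) is a bounded linear operator from $L^2(0,T;L^2(\bbt^3))$ into $L^2(0,T;H^1(\bbt^3))$, with norm controlled by $\|\eta_\eps\|_{W^{2,1}}$. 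Since a bounded linear operator between Hilbert spaces is weak-weak continuous, writing $(\overline{h})_\eps=h*\eta_\eps$ and $(\overline{\div g})_\eps=\div(g*\eta_\eps)$, it suffices to show $u_n\psi_n\weakto u\psi$ and $(\div u_n)\psi_n\weakto(\div u)\psi$ weakly in $L^2(0,T;L^2(\bbt^3))$; applying the operator then yields both displayed convergences.

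For this weak $L^2(L^2)$ convergence I would first note that in either hypothesis the products are bounded in $L^2(0,T;L^2(\bbt^3))$: indeed $u_n$ is bounded in $L^2(0,T;H^1)\hookrightarrow L^2(0,T;L^6)$, $\div u_n$ is bounded in $L^2(0,T;L^2)$, and $\psi_n$ is bounded in $L^\infty$. A bounded sequence in the Hilbert space $L^2(0,T;L^2(\bbt^3))$ whose limit in $\mathcal D'((0,T)\times\bbt^3)$ is uniquely identified must converge weakly to that limit (every weakly convergent subsequence has the same distributional limit). Hence the whole argument reduces to identifying the distributional limits.

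Under hypothesis (i) this is immediate. Writing $u_n\psi_n-u\psi=(u_n-u)\psi_n+u(\psi_n-\psi)$, the first term tends to $0$ strongly in $L^2$ by strong convergence of $u_n$ together with the $L^\infty$-bound on $\psi_n$, while the second tends to $0$ weakly in $L^2$: testing against $g\in L^2$ and using $ug\in L^1$ with $\psi_n\weakto\psi$ weak-$*$ in $L^\infty=(L^1)^*$. The same argument with $\div u_n$ in place of $u_n$ handles $(\div u_n)\psi_n$. Under hypothesis (ii) I would pass to a subsequence so that $u_n\weakto u$ in $L^2(0,T;H^1)$ and, thanks to the uniform bound on $\nabla\div u_n$, also $\div u_n\weakto\div u$ in $L^2(0,T;H^1)$. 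For a smooth test function $\phi$ I split
\[
\int_0^T\!\int_{\bbt^3} u_n\psi_n\,\phi\,dxd\tau=\int_0^T\!\int_{\bbt^3} u_n\,\psi\,\phi\,dxd\tau+\int_0^T\!\int_{\bbt^3} u_n\,(\psi_n-\psi)\,\phi\,dxd\tau,
\]
where the first integral converges by weak-$L^2$ convergence of $u_n$ against $\psi\phi\in L^2$, and the second is bounded by $C_\phi\|u_n\|_{L^2(H^1)}\|\psi_n-\psi\|_{L^2(H^{-1})}\to0$, using that multiplication by $\phi$ is bounded on $H^{-1}(\bbt^3)$ and that $\psi_n\to\psi$ strongly in $L^2(0,T;H^{-1})$. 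The product $(\div u_n)\psi_n$ is treated identically, now pairing $(\psi_n-\psi)\phi$ against $\div u_n$, which is bounded in $L^2(0,T;H^1)$.

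I expect the crux to be precisely this last pairing in case (ii): because $\psi_n$ converges only in the negative-order space $L^2(0,T;H^{-1})$ and $\div u_n$ converges only weakly in $L^2$, the product $(\div u_n)\psi_n$ would not pass to the limit without extra regularity on $\div u_n$. The hypothesis $\|\nabla\div u_n\|_{L^2(0,T;L^2(\bbt^3))}\le C$ is exactly the compensation that places $\div u_n$ in $L^2(0,T;H^1)$ and so lets the $H^{-1}$–$H^1$ duality close the estimate. The smoothing step and case (i) are then routine functional analysis.
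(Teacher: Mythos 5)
Your proposal is correct and follows essentially the same route as the paper's (very terse) proof: identify the weak or distributional limits of the un-mollified products $u_n\psi_n$ and $(\div u_n)\psi_n$, then let the fixed mollifier $\eta_\eps$ and uniform bounds upgrade this to weak convergence in $L^2(0,T;H^1(\bbt^3))$. Your write-up simply supplies the details the paper asserts without proof — in particular the $H^{-1}$--$H^1$ duality pairing in case (ii), where the bound on $\nabla\div u_n$ is exactly what places $\div u_n$ in $L^2(0,T;H^1)$ (you may also want to note that $\psi\in L^\infty$, inherited from the uniform bound on $\psi_n$, so that the pairing $\int u_n\psi\phi$ is a genuine integral) — which is precisely the crux the paper leaves implicit.
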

\begin{proof}
If the assumption (i) holds, then 
\[
 u_n  \psi_n \weakto u\psi \quad\mbox{and}\quad (\div u_n) \psi_n\weakto (\div u) \psi  \quad\mbox{weakly}\quad\mbox{in }~  L^2(0,T;L^2(\bbt^3)) .
\]  
Thus, thanks to the spatial mollification, we have the desired convergence.\\
If the assumption (ii)  holds, then up to a subsequence,
\[
 u_n  \psi_n \to u\psi \quad\mbox{and}\quad (\div u_n) \psi_n\to (\div u) \psi  \quad \mbox{in }~ \mathcal{D}^\prime((0,T)\times\bbt^3).
\]  
Moreover, since
\[
\|\big(\overline{\div( u_n  \psi_n )}\big)_\eps\|_{L^2(0,T;H^1(\bbt^3))} +\|\big(\overline{(\div u_n) \psi_n }\big)_\eps \|_{L^2(0,T;H^1(\bbt^3))} \le C\quad(\mbox{independent of } \,n),
\]
we have the desired convergence.
\end{proof}

Therefore, \eqref{strongg} and \eqref{claim-Rk} implies
\[
J_2\to \int_0^t \int_{\bbt^3} G^\nu R^{\nu}_{\eps,\delta}  dxd\tau,\quad\forall t\le T.
\]
Hence we have from \eqref{dwent} that 
\begin{align}
\begin{aligned}\label{final-k}
&\liminf_{\kappa\to0}\int_{\bbt^3}\Big(\mathcal{F}(\rho^{\kappa, \nu}) + \mathcal{G}(\mathcal{E}^{\kappa, \nu}-\bar{\mathcal{E}}) +\frac{|m^{\kappa,\nu}|^2}{2\rho^{\kappa,\nu}}  \Big)dx \\
&\quad\le C\eps + C\int_{\bbt^3} (-\rho s) (U_0|\bar U) dx +\int_0^t \int_{\bbt^3} G^\nu R^{\nu}_{\eps,\delta}  dxd\tau,\quad\forall t\le T.
\end{aligned}
\end{align}
Since it follows from \eqref{wcu} and \eqref{rho-strong}  that
\beq\label{bluecolor}
m^{\kappa, \nu}=\rho^{\kappa, \nu}u^{\kappa, \nu}\weakto \rho^{\nu}u^{\nu}:=m^\nu  \quad\mbox{weakly}~\mbox{in }~\mathcal{D}'((0,T)\times\bbt^3),
\eeq
using the weakly lower semi-continuity of the convex functionals $\rho\mapsto\mathcal{F}(\rho)$ and $(\rho,m)\mapsto\frac{ |m|^2}{\rho}$, we have
\beq\label{fmcon}
\int_{\bbt^3}\Big(\mathcal{F}(\rho^{\nu})  +\frac{|m^{\nu}|^2}{2\rho^{\nu}}  \Big)dx \le \liminf_{\kappa\to0}\int_{\bbt^3}\Big(\mathcal{F}(\rho^{\kappa, \nu})  +\frac{|m^{\kappa,\nu}|^2}{2\rho^{\kappa,\nu}}  \Big)dx.
\eeq
On the other hand, since the definition of the functional $\mathcal{G}$ yields that the quanity $f:=\mathcal{E}^{\kappa, \nu}-\bar{\mathcal{E}}$ satisfies 
\begin{align}
\begin{aligned}\label{L1}
\int_{\bbt^3}|f| dx&= \int_{|f|\le 1}|f| dx+  \int_{|f|> 1}|f| dx \le \sqrt{\int_{\bbt^3}dx}\sqrt{\int_{|f|\le 1}|f|^2 dx} +  \int_{|f|> 1}|f| dx \\
&\le \sqrt{\int_{\bbt^3} \mathcal{G}(f) dx }+ \int_{\bbt^3}\mathcal{G}(f) dx ,
\end{aligned}
\end{align}
and  it follows from \eqref{final-k} that $\int_{\bbt^3}\mathcal{G}(f) dx$ is  uniformly bounded in $\kappa$, we find that there exists $\mathcal{E}^\nu \in L^\infty(0,T;\mathcal{M}(\bbt^3))$ such that
\[
\mathcal{E}^{\kappa, \nu}\weakto\mathcal{E}^\nu   \quad\mbox{weakly}-*~\quad\mbox{in }~  L^\infty(0,T;\mathcal{M}(\bbt^3)).
\]
Therefore, this and \eqref{L1} imply
\[
\|\mathcal{E}^\nu-\bar{\mathcal{E}}\|_{L^\infty(0,T;\mathcal{M}(\bbt^3))} \le\liminf_{\kappa\to0} \sup_{t\in[0,T]} \left( \sqrt{\int_{\bbt^3} \mathcal{G}(\mathcal{E}^{\kappa, \nu}-\bar{\mathcal{E}}) dx } +\int_{\bbt^3} \mathcal{G}(\mathcal{E}^{\kappa, \nu}-\bar{\mathcal{E}}) dx \right) .
\]
Hence, we obtain from \eqref{final-k} and \eqref{fmcon} that
\beq\label{sim-ineq}
 \sup_{t\in[0,T]} \int_{\bbt^3}\Big(\mathcal{F}(\rho^{\nu})  +\frac{|m^{\nu}|^2}{2\rho^{\nu}} \Big)dx + \|\mathcal{E}^\nu-\bar{\mathcal{E}}\|_{L^\infty(0,T;\mathcal{M}(\bbt^3))} \le C
 \sup_{t\in[0,T]} \Big(\sqrt{ \mathcal{A}^{\nu}_{\eps,\delta}} + \mathcal{A}^{\nu}_{\eps,\delta} \Big),
\eeq
where
\[
\mathcal{A}^{\nu}_{\eps,\delta} :=C\eps + \int_{\bbt^3} (-\rho s) (U_0|\bar U) dx + \left|\int_0^t \int_{\bbt^3} G^\nu R^{\nu}_{\eps,\delta}  dxd\tau \right|.
\]
Also, note that the convergences \eqref{bluecolor} and \eqref{rho-strong} imply
\beq\label{mass-lim}
\partial_t \rho^\nu+ \div (m^\nu) =0\quad \mbox{in }~ \mathcal{D}^\prime((0,T)\times\bbt^3).
\eeq

\subsection{Passing to the limit as $\delta\to 0$}
Since only the term $R^{\nu}_{\eps,\delta}$ in \eqref{sim-ineq} depends on $\delta$, we will pass to the limit on $R^{\nu}_{\eps,\delta}$ as $\delta\to 0$ as $\delta\to 0$.\\

We first recall \eqref{RNE} as
\[
R^{\nu}_{\delta,\eps} := u^{\nu} \cdot\nabla \bar\psi^{\nu}_{\delta,\eps}  - \big(\overline{\div(  \bar u^{\nu}_\delta \psi^{\nu}_{\delta})}\big)_\eps + \big(\overline{(\div \bar u^{\nu}_\delta) \psi^{\nu}_{\delta}}\big)_\eps  .
\]
By \eqref{psinud}, up to a subsequence, there exists $\psi^{\nu}$ such that $0\le \psi^{\nu} \le 1$ and
\beq\label{last-psi}
\psi^{\nu}_{\delta} \weakto \psi^{\nu}\quad\mbox{weakly-*} \quad\mbox{in }~ L^\infty((0,T)\times\bbt^3) \quad \mbox{as } \delta\to 0,
\eeq
which yields that (by the mollification)
\[
\nabla\bar\psi^{\nu}_{\delta,\eps}  \weakto \nabla \bar\psi^{\nu}_{\eps} \quad\mbox{weakly-*}  \quad\mbox{in }~ L^\infty(0,T; H^1(\bbt^3)) \quad \mbox{as } \delta\to 0 .
\]
This together with $u^\nu\in L^2(0,T;H^1(\bbt^3))$ implies
\[
u^{\nu} \cdot\nabla \bar\psi^{\nu}_{\delta,\eps} \weakto u^{\nu} \cdot\nabla \bar\psi^{\nu}_{\eps}\quad\mbox{weakly}  \quad\mbox{in }~  L^2(0,T;H^1(\bbt^3)) \quad \mbox{as } \delta \to 0.
\]
Since $u^\nu\in L^2(0,T; H^1(\bbt^3))$ implies
\[
\bar u_\delta^\nu\to u^\nu \quad\mbox{in }~ L^2(0,T;H^{1}(\bbt^3)) \quad\mbox{strongly}  ~\mbox{as } \delta\to 0,
\]
which together with  Lemma \ref{lem:gencon} and \eqref{last-psi} yields
\begin{align*}
\begin{aligned}
&\big(\overline{\div(  \bar u^{\nu}_\delta \psi^{\nu}_{\delta})}\big)_\eps  \weakto  \big(\overline{\div\big(u^{\nu} \psi^{\nu} \big)} \big)_\eps 
 \quad\mbox{weakly}\quad\mbox{in }~  L^2(0,T;H^1(\bbt^3)) ,\\
 &\big(\overline{(\div \bar u^{\nu}_\delta) \psi^{\nu}_{\delta}}\big)_\eps \weakto
  \big(\overline{(\div u^{\nu})\psi^{\nu}}\big)_\eps 
 \quad\mbox{weakly}\quad\mbox{in }~  L^2(0,T;H^1(\bbt^3)) .
\end{aligned}
\end{align*}

Hence, we have
\beq\label{int-Rgo}
R^{\nu}_{\eps,\delta} \weakto R^{\nu}_{\eps} \quad\mbox{weakly} \quad\mbox{in }~ L^2(0,T;H^1(\bbt^3)),
\eeq
where 
\[
R^{\nu}_{\eps}:=u^{\nu} \cdot\nabla \bar\psi^{\nu}_{\eps} -  \big(\overline{\div\big(u^{\nu} \psi^{\nu} \big)} \big)_\eps+ \big(\overline{(\div u^{\nu})\psi^{\nu}}\big)_\eps.
\]
This and \eqref{Gb} with \eqref{sim-ineq} yield
\beq\label{sim-ineq2}
 \sup_{t\in[0,T]} \int_{\bbt^3}\Big(\mathcal{F}(\rho^{\nu})  +\frac{|m^{\nu}|^2}{2\rho^{\nu}} \Big)dx + \|\mathcal{E}^\nu-\bar{\mathcal{E}}\|_{L^\infty(0,T;\mathcal{M}(\bbt^3))} \le C \sup_{t\in[0,T]}  \Big(\sqrt{ \mathcal{A}^{\nu}_{\eps}} + \mathcal{A}^{\nu}_{\eps} \Big),
\eeq
where
\[
\mathcal{A}^{\nu}_{\eps} :=C\eps + \int_{\bbt^3} (-\rho s) (U_0|\bar U) dx +\left| \int_0^t \int_{\bbt^3} G^\nu R^{\nu}_{\eps}  dxd\tau\right| .
\]

\subsection{Passing to the limit as $\eps\to 0$}
We first rewrite $R^{\nu}_{\eps}$ as
\begin{align*}
\begin{aligned}
R^{\nu}_{\eps}&=\div\big(u^{\nu} \bar\psi^{\nu}_\eps \big) - (\div u^\nu) \bar\psi^{\nu}_\eps -  \big(\overline{\div\big(u^{\nu} \psi^{\nu} \big)} \big)_\eps+ \big(\overline{(\div u^{\nu})\psi^{\nu}}\big)_\eps\\
&=\underbrace{\Big(\big(\overline{(\div u^{\nu})\psi^{\nu}}\big)_\eps- (\div u^\nu) \bar\psi^{\nu}_\eps\Big)}_{=: K_1} +\underbrace{ \Big(\div\big(u^{\nu} \bar\psi^{\nu}_\eps \big)-  \big(\overline{\div\big(u^{\nu} \psi^{\nu} \big)}\big)_\eps \Big) }_{=: K_2} .
\end{aligned}
\end{align*}
First of all, note that
\beq\label{uni-k1}
\|K_1\|_{L^2((0,T)\times\bbt^3)}\le C \quad (\mbox{independent of } \eps).
\eeq
Since $\psi^\nu\in L^\infty((0,T)\times\bbt^3)\subset L^q((0,T)\times\bbt^3)$ for $q<\infty$,
\[
\bar \psi^{\nu}_\eps \to \psi^{\nu}\quad \mbox{strongly}  \quad\mbox{in }~ L^q((0,T)\times\bbt^3) \quad \mbox{as } \eps\to 0,
\]
which together with $u^\nu\in L^2(0,T;H^1(\bbt^3))$ implies
\[
 (\div u^\nu) \bar \psi^{\nu}_\eps \to  (\div u^\nu) \psi^{\nu}\quad\mbox{in }~ L^{r_0}((0,T)\times\bbt^3)\quad\mbox{for some } r_0 \in (1,2).
\]
Moreover, using $ (\div u^\nu) \psi^{\nu}\in L^2((0,T)\times\bbt^3)$, we have
\[
 \big(\overline{(\div u^{\nu})\psi^{\nu}}\big)_\eps\to  (\div u^\nu) \psi^{\nu}\quad\mbox{in }~ L^2((0,T)\times\bbt^3).
\]
Thus, 
\[
K_1 \to 0\quad\mbox{in }~ L^{r_0}((0,T)\times\bbt^3),
\]
which together with \eqref{uni-k1} yields
\[
K_1\weakto 0 \quad\mbox{in }~ L^2((0,T)\times\bbt^3).
\]

For $K_2$, we use the following lemma on the Lions commutator estimate:

\begin{lemma} \cite[Lemma 2.3]{Lions_book1} \label{lem:comm}
 There exists a constant $C$ such that for any $\eps>0$, any functions $f\in W^{1,\alpha}(\bbt^3)$ and $g\in L^{\beta}(\bbt^3)$ with $0\le \frac{1}{\alpha}+\frac{1}{\beta}=\frac{1}{r}\le1$ satisfy
\[
\| \big( \overline{\div\big(fg \big)}\big)_\eps -\div\big(f \bar g_\eps \big) \|_{L^r(\bbt^3)} \le
C  \|f\|_{W^{1,\alpha}(\bbt^3)} \|g\|_{L^{\beta}(\bbt^3)}.
\]
In addition, if $r<\infty$, then
\[
\| \big( \overline{\div\big(fg \big)}\big)_\eps -\div\big(f \bar g_\eps \big) \|_{L^r(\bbt^3)} \to 0 \quad \mbox{as } \eps\to 0.
\]
\end{lemma}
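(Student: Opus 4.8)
The plan is to prove the classical DiPerna--Lions commutator estimate by exposing the first-order Taylor part of $f$ inside the commutator and controlling everything through difference quotients of $f$ alone, never differentiating $g$.

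First I would record an explicit pointwise formula for the commutator. Since $f\in W^{1,\alpha}$ and $g\in L^\beta$ give $fg\in L^r$ with $\tfrac1r=\tfrac1\alpha+\tfrac1\beta$, the distribution $\div(fg)$ has order one and mollification commutes with it; writing $\bar h_\eps=h*\eta_\eps$ and differentiating under the convolution one obtains, for a.e. $x$,
\[
C_\eps(x):=\big(\overline{\div(fg)}\big)_\eps(x)-\div\big(f\,\bar g_\eps\big)(x)
=\sum_i\int_{\bbt^3}\big(f_i(y)-f_i(x)\big)\,g(y)\,\partial_{x_i}\eta_\eps(x-y)\,dy-(\div f)(x)\,\bar g_\eps(x).
\]
After the rescaling $y=x-\eps z$ (writing $\eta$ also for the unit-scale profile, so that $\eps^{4}\partial_{x_i}\eta_\eps(\eps z)=(\partial_i\eta)(z)$), the integral term becomes
\[
\sum_i\int_{\bbr^3}\frac{f_i(x-\eps z)-f_i(x)}{\eps}\;g(x-\eps z)\,(\partial_i\eta)(z)\,dz,
\]
so $C_\eps$ is a bilinear functional of $(f,g)$ built only from a scale-$\eps$ difference quotient of $f$.

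For the $\eps$-uniform bound I would write the difference quotient via the fundamental theorem of calculus,
\[
\frac{f_i(x-\eps z)-f_i(x)}{\eps}=-\int_0^1 z\cdot\nabla f_i(x-s\eps z)\,ds,
\]
and then estimate the integral term in $L^r_x$ by Minkowski's integral inequality in the $(z,s)$ variables followed by H\"older's inequality with exponents $\alpha,\beta$; by translation invariance of the Lebesgue norms each slice contributes $\|\nabla f\|_{L^\alpha}\|g\|_{L^\beta}$, and the weight $|z|\,|\nabla\eta(z)|$ is integrable, giving a bound independent of $\eps$. The remaining term is controlled directly, $\|(\div f)\,\bar g_\eps\|_{L^r}\le\|\div f\|_{L^\alpha}\|\bar g_\eps\|_{L^\beta}\le\|f\|_{W^{1,\alpha}}\|g\|_{L^\beta}$, using Young's convolution inequality. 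Together these yield $\|C_\eps\|_{L^r}\le C\|f\|_{W^{1,\alpha}}\|g\|_{L^\beta}$.

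For the convergence to zero when $r<\infty$, the decisive point is an exact first-order cancellation. When $f$ is smooth the difference quotient tends to $-\sum_j z_j\partial_j f_i(x)$ and $g(x-\eps z)\to g(x)$, so the integral term converges to $g(x)\,\div f(x)$ by the moment identity $\int z_j(\partial_i\eta)(z)\,dz=-\delta_{ij}$ (integration by parts, $\int\eta=1$); this precisely cancels the term $-(\div f)\,\bar g_\eps\to-g\,\div f$, so $C_\eps\to0$ for smooth data. To reach general $f\in W^{1,\alpha}$, $g\in L^\beta$ I would approximate them by smooth functions in $W^{1,\alpha}\times L^\beta$ and invoke bilinearity of $(f,g)\mapsto C_\eps$ together with the uniform bound above, which propagates the convergence from the dense smooth class to the general case. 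The main obstacle is exactly the $\eps$-uniform $L^r$ control of the difference-quotient term: because $g$ cannot be differentiated, the argument must place the full derivative on $f$ and absorb $g$ in $L^\beta$, which is what the fundamental-theorem-of-calculus representation combined with Minkowski and H\"older accomplishes.
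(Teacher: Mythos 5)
The paper itself does not prove this lemma: it is quoted directly from Lions' book (Lemma 2.3 there), so the only meaningful comparison is with the standard DiPerna--Lions commutator argument that the citation points to. Your proof is exactly that argument: the pointwise representation of the commutator with the derivative thrown onto the mollifier, the rescaling that exposes a scale-$\eps$ difference quotient of $f$, the fundamental-theorem-of-calculus bound combined with Minkowski and H\"older (never differentiating $g$) for the $\eps$-uniform estimate, and the first-order moment cancellation $\int z_j\,\partial_i\eta(z)\,dz=-\delta_{ij}$ for the limit. All of these steps are correct as written, and the uniform bound part of your proof is complete.

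The one genuine soft spot is the final density step for the convergence statement. You approximate \emph{both} $f$ and $g$ ``by smooth functions in $W^{1,\alpha}\times L^{\beta}$''. Smooth functions are dense in $L^{\beta}$ only when $\beta<\infty$, and the lemma --- and, crucially, its application in this very paper --- includes the case $\beta=\infty$: in Section 3.5 it is invoked with $f=u^{\nu}\in H^{1}(\bbt^3)$, $g=\psi^{\nu}\in L^{\infty}$, i.e. $\alpha=r=2$, $\beta=\infty$, and for that configuration your approximation of $g$ is impossible. The repair is small and stays inside your framework: prove the convergence for smooth $f$ and \emph{arbitrary} $g\in L^{\beta}$, interpreting the limits $g(x-\eps z)\to g(x)$ and $\bar g_{\eps}\to g$ in $L^{r}_{x}$ rather than pointwise; this is legitimate because $r<\infty$, $r\le\beta$ and $\bbt^3$ has finite measure, so $g\in L^{r}$ and translations and mollifications are continuous operations on $L^{r}$, and the moment identity then produces the same cancellation in $L^{r}$. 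With the smooth-$f$/general-$g$ case in hand, bilinearity plus your uniform bound let you approximate $f$ alone in $W^{1,\alpha}$, which covers every case with $\alpha<\infty$; in the remaining case $\alpha=\infty$, $\beta=r<\infty$, one approximates $g$ instead and runs the symmetric argument, using that $\nabla f\in L^{\infty}\subset L^{r}$ is continuous under translation in $L^{r}$. With this adjustment your proof is complete and covers all exponent configurations allowed by the statement, including the one the paper actually uses.
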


We may apply Lemma \ref{lem:comm} to our case: $ f=u^\nu, g=\psi^\nu, \alpha=2, \beta=\infty$ and $r=2$. Since $ \|u^\nu\|_{L^2(0,T;H^{1}(\bbt^3))}$ and $\|\psi^\nu\|_{L^\infty((0,T)\times\bbt^3) }$ are uniformly bounded in $\nu$, Lemma \ref{lem:comm} implies that
\beq\label{k11con}
\|K_2\|_{L^2(0,T;L^2(\bbt^3))} \le C \|u^\nu\|_{L^2(0,T;H^{1}(\bbt^3))} \|\psi^\nu\|_{L^\infty((0,T)\times\bbt^3)} \le C \quad \mbox{(independent of $\nu$)},
\eeq
and
\beq\label{k12con}
\|K_2(\cdot, t)\|_{L^2(\bbt^3))} \to 0 \quad \mbox{as } \eps\to 0, \quad \mbox{for a.e.}~ t\in(0,T),
\eeq
where we used the fact that $u^\nu(\cdot, t) \in H^{1}(\bbt^3)$ and $\psi^\nu(\cdot, t) \in L^{2}(\bbt^3)$ for a.e. $t\in(0,T)$, and uniformly in $\nu$.\\
Thus, it follows from  \eqref{k11con} and \eqref{k12con} that for $q<2$,
\[
\|K_2(\cdot, t)\|_{L^2(\bbt^3)} \to 0 \quad\mbox{in }~ L^q((0,T))\quad \mbox{as } \eps\to 0,
\]
which together with \eqref{k11con} yields
\[
K_2 \weakto 0 \quad\mbox{in }~ L^2((0,T)\times\bbt^3)\quad \mbox{as } \eps\to 0.
\]
Therefore,
\beq\label{rde}
 R^{\nu}_{\eps} \weakto 0 \quad\mbox{in }~ L^2((0,T)\times\bbt^3)\quad \mbox{as } \eps\to 0.
\eeq
which together with \eqref{Gb} yields
\[
\int_0^t \int_{\bbt^3} G^\nu R^{\nu}_{\eps}  dxd\tau \to 0 \quad \mbox{as }  \eps\to 0,\quad\forall t\in[0,T].
\]
Hence, taking $\eps\to 0$ in \eqref{sim-ineq2}, we have
\begin{align}
\begin{aligned}\label{semi-fine}
&\sup_{[0,T]}\int_{\bbt^3}\Big(\mathcal{F}(\rho^{\nu})   +\frac{|m^{\nu}|^2}{2\rho^{\nu}}\Big)dx + \|\mathcal{E}^\nu-\bar{\mathcal{E}}\|_{L^\infty(0,T;\mathcal{M}(\bbt^3))} \\
&\qquad \le C \sqrt{\int_{\bbt^3} (-\rho s) (U_0|\bar U) dx}+ C\int_{\bbt^3} (-\rho s) (U_0|\bar U) dx.
\end{aligned}
\end{align}

\subsection{Passing to the limit as $\nu\to 0$} \label{subsec:nu}
First of all, by \eqref{pri-mass}, there exists $\rho \in L^\infty(0,T;L^2(\bbt^3))$ such that
\beq\label{fin-lim-rho}
\rho^{\nu} \weakto \rho \quad\mbox{weakly-*}~\mbox{in }~L^\infty(0,T;L^2(\bbt^3)).
\eeq
Since the H\"older inequality with $|m^\nu| =\sqrt{\rho^\nu} \frac{|m^{\nu}|}{\sqrt{\rho^\nu}}$ yields
\[
\|m^\nu\|_{ L^\infty(0,T;L^{\frac{4}{3}}(\bbt^3))} \le \|\sqrt{\rho^\nu}\|_{L^\infty(0,T;L^4(\bbt^3))} \Big\| \frac{m^\nu}{\sqrt{\rho^\nu}}\Big\|_{L^\infty(0,T;L^2(\bbt^3))},
\]
the uniform bounds \eqref{pri-mass} and \eqref{energy-d} imply the uniform bound of $m^\nu$ in $ L^\infty(0,T;L^{\frac{4}{3}}(\bbt^3))$. 
Thus, there exists $m \in L^\infty(0,T;L^{\frac{4}{3}}(\bbt^3))$ such that
\beq\label{fin-lim-m}
m^{\nu} \weakto m \quad\mbox{weakly-*}~\mbox{in }~L^\infty(0,T;L^{\frac{4}{3}}(\bbt^3)).
\eeq
Note that it follows from \eqref{semi-fine} that there exists $\mathcal{E} \in L^\infty(0,T;\mathcal{M}(\bbt^3))$ such that
\beq\label{fin-lim-E}
\mathcal{E}^{\nu} \weakto \mathcal{E} \quad\mbox{weakly-*}~\mbox{in }~L^\infty(0,T;\mathcal{M}(\bbt^3)).
\eeq
Therefore, \eqref{fin-lim-E} and the weakly lower semi-continuity of the convex functionals in \eqref{semi-fine} together with \eqref{fin-lim-rho} and  \eqref{fin-lim-m}, we have
\begin{align}
\begin{aligned}\label{1sts}
&\sup_{[0,T]}\int_{\bbt^3}\Big(\mathcal{F}(\rho)  +\frac{|m|^2}{2\rho}\Big)dx + \|\mathcal{E}-\bar{\mathcal{E}}\|_{L^\infty(0,T;\mathcal{M}(\bbt^3))} \\
&\qquad \le C \sqrt{\int_{\bbt^3} (-\rho s) (U_0|\bar U) dx}+ C\int_{\bbt^3} (-\rho s) (U_0|\bar U) dx,
\end{aligned}
\end{align}
which gives \eqref{thmine}.\\
We also obtain from \eqref{mass-lim} that
\beq\label{fincon}
\partial_t \rho+ \div (m) =0\quad \mbox{in }~ \mathcal{D}^\prime((0,T)\times\bbt^3).
\eeq

\subsection{Uniqueness}
We here prove the last part  of Theorem \ref{thm:main}, for the uniqueness.\\
Let  $(U_0^n)_{n\in\bbn}$ be a sequence of initial data such that
\beq\label{inia}
\int_{\bbt^3} (-\rho s) (U_0^n|\bar U) dx \to 0 \quad\mbox{as } n\to \infty.
\eeq
Then, by \eqref{1sts},
\[
\liminf_{n\to \infty}\left[\sup_{[0,T]}\int_{\bbt^3}\Big(\mathcal{F}(\rho^n)  +\frac{|m^{n}|^2}{2\rho^n} \Big)dx + \|\mathcal{E}^n-\bar{\mathcal{E}}\|_{L^\infty(0,T;\mathcal{M}(\bbt^3))} \right]=0.
\]
Thus, as $n\to\infty$,
\[
{\mathcal{E}}^n \to \bar{\mathcal{E}}  \quad\mbox{in }~ L^\infty(0,T;\mathcal{M}(\bbt^3)) ,
\]
and
\[
\frac{|m^{n}|}{\sqrt{\rho^n}}  \to 0 \quad\mbox{in }~L^\infty(0,T;L^2(\bbt^3)) .
\]
Since
\begin{align*}
\begin{aligned}
\rho^n &= \rho^n \mathbf{1}_{\rho^n\in[\rho_*, \rho^*]} + \rho^n \mathbf{1}_{\rho^n\in(0,\rho_*)}+ \rho^n \mathbf{1}_{\rho^n\in(\rho^*,\infty)} \\
&\le \rho^* + | \rho^n \mathbf{1}_{\rho^n\in(0,\rho_*)}-\rho_*| + \rho_* + | \rho^n \mathbf{1}_{\rho^n\in(\rho^*,\infty)}-\rho^*| + \rho^*,
\end{aligned}
\end{align*}
where $\rho_*$ and $\rho^*$ are defined in \eqref{def-FG0}, it follows from \eqref{def-FG0} and $\liminf_{n \to \infty} \sup_{[0,T]}\int_{\bbt^3} \mathcal{F}(\rho^\eps) dx=0$ that
\[
\|\rho^n\|_{L^\infty(0,T;L^2(\bbt^3))}^2 \le C+C \sup_{[0,T]}\int_{\bbt^3} \mathcal{F}(\rho^n) dx \le C,
\]
which implies that there exists $\rho$ such that
\beq\label{rhoef}
\rho^n \weakto \rho \quad\mbox{weakly-*} \quad\mbox{in }~L^\infty(0,T;L^2(\bbt^3)).
\eeq
Therefore,
\[
\|m^n\|_{ L^\infty(0,T;L^{\frac{4}{3}}(\bbt^3))} \le \|\sqrt{\rho^n}\|_{L^\infty(0,T;L^4(\bbt^3))} \Big\| \frac{m^{n}}{\sqrt{\rho^n}}\Big\|_{L^\infty(0,T;L^2(\bbt^3))} \le C\Big\| \frac{m^{n}}{\sqrt{\rho^n}}\Big\|_{L^\infty(0,T;L^2(\bbt^3))} \to 0.
\]
Thus, it remains to prove that $\rho=\bar\rho$.\\
To this end, we will use the equation \eqref{fincon}: 
\beq\label{limconeq}
\partial_t \rho^n+ \div_x (m^n) =0 \quad \mbox{in }~ \mathcal{D}^\prime((0,T)\times\bbt^3).
\eeq
 For test functions for the above equation, we recall the same functions as in \eqref{time-mol}:
for any $t\in(0,T)$, and any $\zeta<t/2$, we consider
\[
\varphi_{t, \zeta} (s) := \int_0^s \Big(\phi_\zeta (z) -\phi_\zeta (z- t)  \Big) dz,
\]
where
\[
\phi_\zeta(s):=\frac{1}{\zeta}\phi \big(\frac{s-\zeta}{\zeta}\big)\quad\mbox{for any } \zeta>0 ,
\]
and $\phi:\bbr\to\bbr$ is a non-negative smooth function such that $\phi (s)=\phi(-s)$, $\int_{\bbr}\phi=1$ and $\mbox{supp } \phi = [-1,1]$.\\
Then we consider the following test functions for the continuity equation:
\[
\Phi(x) \varphi_{t, \zeta} (s),
\]
where $\Phi\in C^\infty(\bbt^3)$. That is,
\[
\int_0^\infty \int_{\bbt^3} \Big(   \Phi(x) (-\varphi_{t, \zeta}' (s)) \rho^n  \Big)dxds =\int_0^\infty \int_{\bbt^3} \Big( \varphi_{t, \zeta} (s)\nabla \Phi \cdot m^n \Big)dxds.
\]
Thus,
\[
\int_0^\infty \int_{\bbt^3} \Big(   \Phi(x) \phi_\zeta (s- t)  \rho^n  \Big)dxds =\int_0^\infty \int_{\bbt^3} \Big(   \Phi(x) \phi_\zeta (s)  \rho^n  \Big)dxds + \int_0^\infty \int_{\bbt^3} \Big( \varphi_{t, \zeta} (s)\nabla \Phi \cdot m^n \Big)dxds .
\]
Note that since $\rho^n\in L^\infty(0,T;L^2(\bbt^3))$ and $m^n \in L^2(0,T;L^{\frac{4}{3}}(\bbt^3))$ by \eqref{thmlim}, it follows from \eqref{limconeq} and Aubin-Lions Lemma that $\rho^n\in C([0,T];L^1(\bbt^3))$.\\
Therefore, taking $\zeta\to0$ above, we find that 
\[
 \int_{\bbt^3}    \Phi(x)  \rho^n(x,t)  dx =\int_{\bbt^3}  \Phi(x)  \rho^n_0(x)  dx + \int_0^t \int_{\bbt^3} \nabla \Phi \cdot m^n dxds .
\]
Note that \eqref{inia} yields
\beq\label{rhof}
\int_{\bbt^3} |\rho_0^n-\bar\rho|^2 dx \le \int_{\bbt^3}  p_e(\rho_0^n|\bar\rho) dx \to 0 \quad\mbox{as } n\to \infty.
\eeq
Since
\[
\left|\int_0^t \int_{\bbt^3} \nabla \Phi \cdot m^n dxds \right| \le \|\nabla \Phi \|_{L^{4}(\bbt^3)} \|m^n\|_{ L^\infty(0,T;L^{\frac{4}{3}}(\bbt^3))} \to 0,
\]
taking $n\to\infty$ and using \eqref{rhoef} and \eqref{rhof}, 
we have
\[
 \int_{\bbt^3}    \Phi(x)  \rho (x,t)  dx =\int_{\bbt^3}  \Phi(x)  \bar\rho(x)  dx ,\quad\forall t\in [0,T].
\]
Hence, $\rho=\bar\rho$ a.e. on $\bbt^3\times[0,T]$.\\

\noindent{\bf Conflict of Interest:} The authors declared that they have no conflicts of interest to this work.

\bibliography{Kang-Vasseur2019}
\end{document}